\theoremstyle{definition}
\newtheorem{theorem}{Theorem}[section]
\newtheorem{definition}[theorem]{Definition}
\newtheorem{conjecture}[theorem]{Conjecture}
\newtheorem{proposition}[theorem]{Proposition}
\newtheorem{lemma}[theorem]{Lemma}
\newtheorem{remark}[theorem]{Remark}
\newtheorem{corollary}[theorem]{Corollary}
\newtheorem*{remark*}{Remark}
\numberwithin{equation}{section}
\newcommand{\mc}{\mathcal}
\newcommand{\mb}{\mathbb}
\newcommand{\eps}{\varepsilon}
\newcommand{\R}{\mathbb{R}}
\DeclareMathOperator{\vol}{Vol}
\title[Generic Multiplicity One Mean Curvature Flow]{Local Entropy and Generic Multiplicity One  Singularities of Mean Curvature Flow of Surfaces}
\date{\today}
\author{Ao Sun}
\address{Department of Mathematics, Massachusetts Institute of Technology, Cambridge, MA 02139, USA}
\address{Current address: Department of Mathematics,
	University of Chicago,
	5734 S. University Avenue,
	Chicago, IL 60637, USA}
\email{aosun@uchicago.edu}
\begin{document}

\begin{abstract}
In this paper we prove that the generic singularities of mean curvature flow of closed embedded surfaces in $\R^3$ modeled by closed self-shrinkers with multiplicity has multiplicity one. Together with the previous result by Colding-Minicozzi in \cite{colding2012generic}, we conclude that the only generic singularity of mean curvature flow of closed embedded surfaces in $\R^3$ modeled by closed self-shrinkers is a multiplicity one sphere. We also construct particular perturbations of the flow to avoid those singularities with multiplicity higher than one. Our result partially addresses the well-known multiplicity one conjecture by Ilmanen.
\end{abstract}

\maketitle

\section{Introduction}
Let $\{M_t\}_{t\in [0,T)}$ be a family of hypersurfaces in $\R^{n+1}$. We say they are flowing by mean curvature if they satisfy the equation
\[\partial_t x=-H\mathbf{n}.\]
Here $x$ is the position, $H$ is the mean curvature, and $\mathbf{n}$ is the normal vector field. Mean curvature flow is the negative gradient flow for the area functional, and has been studied in a number of ways in the past 40 years. It is well-known that any closed hypersurface in $\R^{n+1}$ must generate singularities in finite time, and a central topic in the study of mean curvature flow is understanding these singularities. 

Given a spacetime point $(y,s)\in\R^{n+1}\times[0,T]$, we define the blow up sequence of $M_t$ at $(y,s)$ to be a sequence of flows $\{M_t^{\alpha_i}\}:=\{\alpha_i(M_{s+\alpha_i^{-2}t}-y)\}$ with $\alpha_i\to \infty$, which means that we translate $M_{s+\alpha_i^{-2}t}$ by $-y$ and then rescale it by $\alpha_i$. Using Huisken's monotonicity formula (see \cite{huisken1990asymptotic}), Ilmanen \cite{ilmanen1995singularities} and White \cite{white94partial} proved that $\{M_t^{\alpha_i}\}$ weakly converges to a homothetic weak mean curvature flow (which is known to be the Brakke flow, cf. \cite{brakke2015motion}) $\{\sqrt{-t}\Sigma\}_{t\in(-\infty,0)}$. This limit is known to be the \emph{tangent flow} at the singular point. We call $\Sigma$ a self-shrinker, and $\Sigma$ weakly satisfies the elliptic equation
\[H=\frac{\langle x,\mathbf{n}\rangle}{2}.\]

Moreover, Ilmanen \cite{ilmanen1995singularities} proved that if $\{M_t\}_{t\in[0,T)}$ is a family of closed embedded surfaces in $\R^3$ flowing by mean curvature, and at time $T$ it generates a singularity, then the blow up limit is actually a smooth embedded self-shrinker in $\R^3$. However, the convergence itself may not be smooth, in other words the convergence may have multiplicity higher than one. Ilmanen made the following famous conjecture:

\begin{conjecture}{(Multiplicity One Conjecture, see \cite[p.7]{ilmanen1995singularities})}
For a family of closed smooth embedded surfaces in $\R^3$ flowing by mean curvature, every tangent flow at the first singular time has multiplicity one.
\end{conjecture}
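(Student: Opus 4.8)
The plan is to argue by contradiction. Suppose some tangent flow at the first singular time has the form $\mc T=m\sqrt{-t}\,\Sigma$ with $m\ge 2$, where, by Ilmanen's theorem, $\Sigma\subset\R^3$ is a smooth properly embedded self-shrinker and the multiplicity is a locally constant positive integer; localizing, assume the multiplicity is $m\ge 2$ there. Center Huisken's rescaled flow at the singular point, $\mc M_\tau:=e^{\tau/2}\big(M_{-e^{-\tau}}\big)$, so that $\mc M_\tau\to m\Sigma$ as $\tau\to\infty$, both as Brakke flows and as varifolds. The first step is a \emph{sheeting} statement: on every fixed ball $B_R$ there is $\tau_0=\tau_0(R)$ such that for $\tau\ge\tau_0$, outside a ``bad set'' $\cB_\tau\subset B_R$, the flow $\mc M_\tau$ consists of exactly $m$ pairwise disjoint smooth sheets that are normal graphs $u_1(\cdot,\tau)<\cdots<u_m(\cdot,\tau)$ over $\Sigma\cap B_R$, with $u_i\to 0$ in $C^2_{\mathrm{loc}}$ and with uniform polynomial growth at infinity. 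The strict ordering (no crossings) is exactly where embeddedness is used, through the avoidance principle. Retaining an adjacent pair of sheets, it suffices to rule out $m=2$.

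Set $w:=u_2-u_1>0$. A direct computation for rescaled mean curvature flow of graphs over a self-shrinker shows that $w$ solves a linear parabolic equation $\partial_\tau w=\mc L_\Sigma w+E$, where $\mc L_\Sigma=\Delta_\Sigma-\tfrac12\langle x,\nabla\,\cdot\,\rangle+|A_\Sigma|^2+\tfrac12$ is the self-shrinker Jacobi operator and the error term is controlled quadratically by the sheets, $|E|\le\epsilon(\tau)\big(|w|+|\nabla w|+|\nabla^2 w|\big)$ with $\epsilon(\tau)\to 0$ as $\tau\to\infty$. The operator $\mc L_\Sigma$ is self-adjoint on the Gaussian-weighted space $L^2(\Sigma,e^{-|x|^2/4})$, with discrete spectrum $\mu_1>\mu_2\ge\cdots$ and rapidly decaying eigenfunctions (part of the Colding--Minicozzi analysis of drift operators on self-shrinkers). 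Crucially $\mc L_\Sigma H=H$: either $\Sigma$ is a plane, where the ground state is the positive constant with $\mu_1=\tfrac12$, or $H\not\equiv 0$, so $1$ is an eigenvalue and $\mu_1\ge 1$. In all cases $\mu_1>0$ and the ground state $\phi_1$ is strictly positive.

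The contradiction comes from the behavior of $w$ as $\tau\to\infty$. Let $a_+,a_0,a_-$ be the weighted $L^2$-norms of the projections of $w$ onto the spans of the eigenfunctions with positive, zero, and negative eigenvalue. The smallness of $E$ turns the evolution of $w$ into a differential-inequality system for $(a_+,a_0,a_-)$ of Merle--Zaag type, whose conclusion is that, as $\tau\to\infty$, either $a_+$ strictly dominates, or $\ker\mc L_\Sigma\ne\{0\}$ and $a_0$ strictly dominates. Because $w>0$ and $\phi_1>0$, the $\phi_1$-component of $w$ is strictly positive, so $a_+>0$ for all $\tau$. In the $a_+$-dominant case the differential inequality gives $a_+(\tau)\ge a_+(\tau_0)\,e^{(\mu_1-o(1))(\tau-\tau_0)}\to\infty$; this contradicts $\mc M_\tau\to m\Sigma$, which forces $w\to 0$, hence $a_+\to 0$, in the weighted norm (here the uniform polynomial growth and dominated convergence are used). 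In the $a_0$-dominant case $w$ is, to leading order, its projection onto $\ker\mc L_\Sigma$; passing to a subsequence, $w/\|w\|$ converges to a nonnegative element $\psi$ of $\ker\mc L_\Sigma$ with $\|\psi\|=1$, and the strong maximum principle for the Schr\"odinger-type operator $\mc L_\Sigma$ forces $\psi>0$; but a positive eigenfunction is the ground state, so $\mu_1=0$, contradicting $\mu_1>0$. Therefore $m=1$, i.e., every tangent flow at the first singular time has multiplicity one.

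I expect the \textbf{main obstacle} to be making the sheeting step rigorous all the way up to the singular time on regions exhausting $\Sigma$, with quantitative control of the bad sets $\cB_\tau$. The difficulty is that $\mc M_\tau\to m\Sigma$ with $m\ge 2$, so the Gaussian density is close to $m$, not to $1$, and White's local regularity theorem does not apply directly; upgrading the varifold convergence to smooth $m$-sheeted convergence appears to require a parabolic (Cheeger--Naber style) quantitative stratification showing that each $\cB_\tau$ is parabolically small at every scale, together with a delicate iteration across dyadic scales and uniform interior estimates for the individual sheets. A secondary obstacle is the functional-analytic set-up on the noncompact shrinker $\Sigma$: one must establish the uniform polynomial growth and weighted integrability of $w,\nabla w,\nabla^2 w$ and $E$ needed to legitimize the spectral decomposition and the Merle--Zaag lemma, which forces cutoff arguments adapted to the drift measure together with a priori control of the ends of $\Sigma$.
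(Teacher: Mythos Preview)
The statement you attempted to prove is listed in the paper as a \emph{conjecture}, not a theorem. The paper explicitly says ``As far as we know, this conjecture is still open,'' and it makes no attempt to prove it. Instead, the paper establishes only a \emph{generic} version (Theorem~\ref{Thm:Main Theorem}): one can perturb the flow so that a closed self-shrinker with multiplicity $m>1$ does not arise as a tangent flow. There is therefore no proof in the paper to compare your proposal against.

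As to the proposal itself: the outline is a plausible strategy of Merle--Zaag type, and the spectral facts you quote about $\mc L_\Sigma$ are correct, but the argument as written is not a proof. You yourself identify the decisive gap: the sheeting step. Upgrading varifold convergence $\mc M_\tau\to m\Sigma$ to a smooth $m$-sheeted decomposition with bad sets $\cB_\tau$ small enough to run the linear analysis is precisely the hard part of the conjecture; Ilmanen's argument (reviewed in Section~\ref{S:Ilmanen's Analysis of MCF Near Singularity}) gives sheeting only along a \emph{sequence} of times and away from finitely many curvature concentration points, not uniformly in $\tau$ as your ODE argument requires. Without that, the differential inequalities for $(a_+,a_0,a_-)$ cannot be set up, and the error bound $|E|\le\epsilon(\tau)(|w|+|\nabla w|+|\nabla^2 w|)$ is unjustified across the bad set. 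The secondary obstacle you flag, the weighted analysis on a possibly noncompact $\Sigma$ and the claimed uniform polynomial growth of $w$, is also not addressed and is not a routine matter. In short, your sketch isolates the right linear mechanism but does not supply the nonlinear input needed to close it, and the paper does not claim to either.
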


As far as we know, this conjecture is still open. In this paper, we address a generic version of this conjecture. Roughly speaking, we proved that for a smooth family of closed embedded surfaces in $\R^3$ flowing by mean curvature, if one tangent flow at the first singular time is $m\sqrt{-t}\Sigma$ with multiplicity $m>1$, then we can perturb the flow such that $m\sqrt{-t}\Sigma$ can never be the tangent flow of the perturbed flow. More precisely,

\begin{theorem}\label{Thm:Main Theorem}
Suppose $\{M_t\}_{t\in [0,T)}$ is a family of smooth closed embedded surfaces in $\mb R^{3}$ flowing by mean curvature, and suppose a tangent flow of $M_t$ at $(y,T)$ for some $y\in\R^3$ is $\{\sqrt{-t}\Sigma\}_{t\in(-\infty,0)}$ with multiplicity $m>1$. Given $\epsilon_0>0$, there exists $t_0\in[0,T)$ such that there is a perturbed closed embedded surface $\tilde M_{t_0}$ which is isotopic to $M_{t_0}$, has Hausdorff distance to $M_{t_0}$ less than $\epsilon_0\sqrt{T-t_0}$, and if $\{\tilde M_t\}_{t\in[t_0,T')}$ is a family of closed embedded surfaces flowing by mean curvature starting from $\tilde M_{t_0}$, the tangent flow will never be $m\sqrt{-t}\Sigma$. \end{theorem}

The idea follows from the significant paper \cite{colding2012generic} by Colding-Minicozzi. In \cite{colding2012generic}, Colding-Minicozzi proved that among all the multiplicity one self-shrinkers in $\R^{n+1}$, the sphere $S^n$ and the generalized cylinders $S^k\times\R^{n-k}$ are the only generic singularities, in the sense that they cannot be perturbed away. In particular, the sphere $S^n$ is the only closed generic singularity among all multiplicity one singularities. Colding-Minicozzi used the fact that the sphere is the only closed entropy stable self-shrinker to prove the genericity. We have a similar result for self-shrinkers with higher multiplicity.

\begin{theorem}\label{Thm: higher multiplicity self-shrinkers are entropy unstable}
Let $\Sigma$ be a closed self-shrinker, and $m>1$ be a positive integer. Then $m\Sigma$ is entropy unstable. 
\end{theorem}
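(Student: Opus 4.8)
The plan is to produce, for every $\eps>0$, a smooth closed embedded surface within $\eps$ of $m\Sigma$ (in the varifold/graph topology relevant to the definition of entropy stability) whose entropy is \emph{strictly} less than $\lambda(m\Sigma)$; since $F_{x_0,t_0}$ is additive over disjoint unions and $\lambda(cN)=\lambda(N)$ by scaling, $\lambda(m\Sigma)=m\lambda(\Sigma)$, so the target is to beat $m\lambda(\Sigma)$. The construction splits according to whether $\Sigma$ is a round sphere, because only then is $\Sigma$ itself entropy stable.

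\emph{Non-sphere case.} If $\Sigma$ is not a round sphere, then by the Colding--Minicozzi classification in \cite{colding2012generic} ($S^n$ is the only closed entropy stable self-shrinker) $\Sigma$ is already entropy unstable, so there is a normal graph $\Sigma'=\{x+u(x)\nn(x):x\in\Sigma\}$ with $\|u\|_{C^2}$ as small as we wish and $\lambda(\Sigma')=\lambda(\Sigma)-\eta$ for some fixed $\eta>0$. Fix such a $\Sigma'$, and choose small distinct constants $\eps_2,\dots,\eps_m$ lying outside $[-\|u\|_{C^0},\|u\|_{C^0}]$ and mutually spaced apart, so that the constant normal graphs $\Gamma_j=\{x+\eps_j\nn(x)\}$ are pairwise disjoint and disjoint from $\Sigma'$; set $\wti M=\Sigma'\cup\Gamma_2\cup\cdots\cup\Gamma_m$, a smooth closed embedded surface $C^2$-close to $m\Sigma$. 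By additivity of $F_{x_0,t_0}$ and the fact that each piece is bounded by its \emph{own} entropy, $\lambda(\wti M)\le\lambda(\Sigma')+\sum_{j\ge 2}\lambda(\Gamma_j)$. Now $\lambda$ is continuous under $C^2$-small perturbations of a fixed closed surface (lower semicontinuity is automatic as a sup of continuous functions; upper semicontinuity holds because $\lambda(\Sigma)>1$ confines the maximizing $(x_0,t_0)$ of all nearby surfaces to a fixed compact subset of $\R^{n+1}\times(0,\infty)$), so choosing the $\eps_j$ small enough gives $\lambda(\Gamma_j)<\lambda(\Sigma)+\tfrac{\eta}{2(m-1)}$ and hence $\lambda(\wti M)<m\lambda(\Sigma)-\tfrac{\eta}{2}<\lambda(m\Sigma)$. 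As $\|u\|_{C^2}$ may be taken arbitrarily small, $m\Sigma$ is entropy unstable.

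\emph{Sphere case.} Let $\Sigma=S^n(\sqrt{2n})$, centered at the origin. For distinct $r_1,\dots,r_m$ close to $\sqrt{2n}$ put $\wti M=\bigcup_{i=1}^m S^n(r_i)$, a union of disjoint concentric spheres converging to $m\Sigma$. By scale-invariance $\lambda(S^n(r_i))=\lambda(S^n)=:\lambda_n$ for each $i$, so $\lambda(\wti M)\le m\lambda_n=\lambda(m\Sigma)$, and I claim this is strict. The supremum is attained at some $(x_0^*,t_0^*)$ with $0<t_0^*<\infty$ (as $t_0\to\infty$ or $|x_0|\to\infty$ the sum tends to $0$; as $t_0\to 0$ the kernel concentrates on at most one sphere, so the $\limsup$ is $\le\lambda_n<m\lambda_n$). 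If $\lambda(\wti M)=m\lambda_n$, then since each summand is $\le\lambda_n$ we would need $F_{x_0^*,t_0^*}(S^n(r_i))=\lambda_n$ for every $i$, i.e. $(x_0^*,t_0^*)$ simultaneously maximizes the Gaussian density of every $S^n(r_i)$ — impossible once one knows: \emph{Lemma. For each $\rho>0$, $(x_0,t_0)\mapsto F_{x_0,t_0}(S^n(\rho))$ attains its supremum $\lambda_n$ only at $(0,\rho^2/(2n))$.} By scaling it suffices to prove this for $\rho=1$: using rotational symmetry one writes $F_{x_0,t_0}(S^n(1))$, up to an explicit positive factor, as $e^{-(1+|x_0|^2)/(4t_0)}\int_{S^n}e^{\langle\omega,x_0\rangle/(2t_0)}\,d\omega$, checks that the unique critical point with $0<t_0<\infty$ is $(0,1/(2n))$ with negative-definite Hessian (in $\log F$), and rules out suprema in the limits $t_0\to 0,\infty$, $|x_0|\to\infty$ via the pointwise density limit and the decay estimates; equivalently, a round shrinking sphere has one singular spacetime point and the Gaussian density of the self-shrinking flow is strictly maximized only from it. Granting the Lemma, the $r_i$ being distinct forces $\lambda(\wti M)<m\lambda_n$, and letting $r_i\to\sqrt{2n}$ shows $m\Sigma$ is entropy unstable.

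I expect the \textbf{main obstacle to be this Lemma}: not that it is deep, but that the uniqueness and non-degeneracy of the maximizer require a genuine analysis of the Bessel-type profile rather than the soft bounds used elsewhere. (A more structural alternative would be to show directly that $m\Sigma$ is $F$-unstable — easy: in the non-sphere case perturb a single sheet by a Colding--Minicozzi unstable direction of $\Sigma$, and in the sphere case perturb the $m$ sheets by distinct constants, a variation not removable by any single ambient translation or dilation — and then establish the multiplicity analogue of ``$F$-unstable $\Rightarrow$ entropy unstable''; on that route the obstacle shifts to re-running the Colding--Minicozzi gradient-flow argument with $m$ sheets, tracking the single global translation/dilation mode against the $m$-dimensional space of normal shifts, which is presumably where a localized notion of entropy becomes useful.)
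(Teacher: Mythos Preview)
Your approach is correct and essentially the same as the paper's: split into sphere versus non-sphere, perturb one sheet by the Colding--Minicozzi unstable direction (resp.\ by a constant) and use subadditivity of entropy to beat $m\lambda(\Sigma)$. The paper streamlines by perturbing only \emph{one} sheet and leaving the other $m-1$ exactly at $\Sigma$ (so $\lambda((m-1)\Sigma)=(m-1)\lambda(\Sigma)$ with no continuity argument for the $\Gamma_j$), and your flagged Lemma on the uniqueness of the $F$-maximizer for a round sphere is both implicitly used by the paper and immediate from the first-variation formula: any critical $(x_0,t_0)$ of $F_{\cdot,\cdot}(S^n(\rho))$ forces the shrinker equation $H=\langle x-x_0,\nn\rangle/(2t_0)$ to hold on $S^n(\rho)$, which pins $x_0=0$ and $t_0=\rho^2/(2n)$.
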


Our main theorem shows that in $\R^3$, among all the closed self-shrinkers with possible higher multiplicity, the multiplicity one self-shrinkers are the only generic singularities. Thus combining our result with the result by Colding-Minicozzi, we prove that the only closed generic singularity of mean curvature flow of closed embedded surfaces in $\R^3$ is the multiplicity one sphere.

\vspace{10pt}
\subsection*{Mean Curvature Flow and Singularities} Mean curvature flow was first studied in material science in the 1950s. Mullins may have been the first to write down the mean curvature flow, introducing the mean curvature flow equation when he was studying coarsening in metals. The modern study of mean curvature flow was initiated by Brakke. In his monograph \cite{brakke2015motion} which was first published in 1978, Brakke studied the weak mean curvature flow, which is known as the Brakke flow. Later Ilmanen \cite{ilmanen1994elliptic} generalized it to a much more general setup.

Since then, there are have been studies on mean curvature flow. Huisken first studied mean curvature flow from a differential geometry perspective, and introduced his famous monotonicity formula, see \cite{huisken1990asymptotic}. Osher-Sethian \cite{osher-sethian} used the level set flow method to study mean curvature flow numerically, and later this approach was rigorously developed by Chen-Giga-Goto \cite{chen-giga-goto} and Evans-Spruck \cite{evans-spruck}. We refer to the surveys \cite{white2002ICM}, \cite{colding-minicozzi-pedersen} for general discussion and background. 

A family of closed hypersurfaces flowing by mean curvature in $\R^{n+1}$ must generate singularities in finite time, and the singularities are modelled by self-shrinkers. Understanding the singularities is one of the central topics in the study of mean curvature flow. 

For closed curves flowing by mean curvature in the plane, Gage-Hamilton \cite{gage-hamilton} and Grayson \cite{grayson} proved that all closed embedded curves in the plane will shrink to a single point under the mean curvature flow. Moreover, the singularity is spherical, i.e. the tangent flow at the singularity is a circle. It is also the only closed embedded self-shrinker in one-dimensional case. Later Abresch-Langer \cite{abresch-langer} classified all immersed self-shrinkers in one-dimensional case, which are now called the Abresch-Langer curves.

The higher dimensional cases are much more complicated. Under a convexity assumption, Huisken \cite{huiksen1984convex} proved that the only possible singularity that arises is modelled by the sphere. With other curvature assumptions like mean convexity (cf. \cite{white94partial}, \cite{huisken-sinestrari1999}, \cite{huisken-sinestrari1999-2}, \cite{white2000meanconvex}, \cite{white2003thenature}, \cite{haslhofer-kleiner}, \cite{brendle-huisken}, \cite{haslhofer-kleiner-2}), we also know what possible singularity models can arise. The main reason is that we can classify the self-shrinkers with those curvature assumptions. For example, Huisken \cite{huisken1990asymptotic} and Colding-Minicozzi \cite{colding2012generic} classified all the mean convex self-shrinkers. Without curvature assumptions, we know very little about the classification of the self-shrinkers. For example, Brendle \cite{brendle-genus0} proved that we can classify the self-shrinkers in $\R^3$ with genus $0$, but with more complicated topology no classification result is known.

These curvature assumptions also imply that the tangent flow at the singularities must be multiplicity one self-shrinkers. The reason is that those curvature assumptions provide the so-called ``non-collapsing" property, cf. \cite{white2000meanconvex}, \cite{andrews}, \cite{haslhofer-kleiner}. Roughly speaking, ``non-collapsing" property implies that the blow up sequence cannot converge to the limit which is a self-shrinker in a singular manner, hence can not have high multiplicity.

Without these curvature assumptions, the problem is much more complicated. There are a number of constructions of self-shrinkers by Kapouleas-Kleene-Moller \cite{kapouleas-kleene-moller}, Moller \cite{moller2011closed}, Nyugen \cite{nguyen1}, \cite{nguyen2}, etc. The fruitful examples suggest that it is impossible to classify all self-shrinkers in higher dimensions. Also, when the dimension of the flow is larger than $2$, we do not even know that whether the tangent flow is a smooth self-shrinker. The non-collapsing property also hasn't been developed yet. 

\vspace{10pt}
\subsection*{Generic Singularity and Entropy} 
All the difficulties suggest that it would be too complicated studying all the mean curvature flow singularities. So Huisken and Angenent-Chopp-Ilmanen \cite{angenent-ilmanen-chopp} suggested to study the mean curvature flow starting at a generic closed embedded hypersurface. 

The idea of studying generic objects is widely used in geometric analysis. For example, White \cite{white-bumpy} studied a generic set of metrics, called bumpy metrics, and recently it is used by Irie-Marques-Neves \cite{irie-marques-neves} to prove Yau's conjecture of the existence of infinitely many minimal surfaces for manifolds with generic metrics. Another example is in gauge theory. In Donaldson's work on the topology of four-manifolds, in order to find a moduli space of the solutions of self-duality equations with isolated singularities, a perturbation of the space is necessary. Donaldson proved that a generic perturbation would give us a nice moduli space, see \cite{donaldson_4manifold}.

In \cite{colding2012generic}, Colding-Minicozzi proved that the sphere and the generalized cylinders are the only generic singularity models, in the sense that they are the only entropy stable self-shrinkers. The entropy of a hypersurface $\Sigma$ is defined to be
\[\lambda(\Sigma)=\sup_{(x_0,t_0)\in\R^{n+1}\times(0,\infty)}\frac{1}{(4\pi t_0)^{n/2}}\int_\Sigma e^{-\frac{|x-x_0|^2}{4t_0}}d\mu.\]
The entropy is translating invariant and scaling invariant. One can show that the self-shrinkers are critical points of entropy, and the entropy is achieved at $(x_0,t_0)=(0,1)$. Moreover, by Huisken's monotonicity formula, the entropy is non-increasing along with the mean curvature flow. Thus a self-shrinker $\Sigma$ with entropy larger than $M$ can never be the singularity model of mean curvature flow starting from $M$. 

Colding-Minicozzi defined a self-shrinker to be entropy unstable if there is a small perturbation of the self-shrinker which has strictly smaller entropy. Intuitively, this implies that we can perturb the flow to avoid this self-shrinker as the singularity model. In contrast, if the tangent flow is a multiplicity one sphere, then the flow becomes convex when approaching the singular spacetime point, hence it is still convex under small perturbation. By Huisken's result, the singularity model is still a sphere. Hence the sphere can not be perturbed away. Thus the sphere is entropy stable which also implies that it is dynamically stable. 

For immersed curve shortening flow in the plane, Baldauf and the author \cite{baldauf2018sharp} used this idea developed by Colding-Minicozzi to define a generic mean curvature flow of immersed curves in the plane. We showed that after small perturbations, a mean curvature flow of immersed curves only generates either a spherical singularity (hence it is a mean curvature flow of embedded curves), or a type II singularity, which is the singularity where curvature blows up super fast. The type II singularities are cusp singularities for the curve shortening flow.

Recently in a sequel of papers, Colding-Minicozzi proved that the entropy stability / instability implies the dynamic stability/instability respectively, see \cite{colding2018dynamics}, \cite{colding2018wandering}. More precisely, if $\Sigma$ is an entropy unstable self-shrinker with multiplicity one, then all but very few hypersurfaces near $\Sigma$ will leave a neighbourhood of $\Sigma$ under the mean curvature flow, modulo the translations and dilations. We hope our paper may serve as a starting point of this kind of analysis for singularities with multiplicity, and similar results should hold for higher multiplicity singularities, at least in $\R^3$.

\vspace{10pt}
\subsection*{Ideas of the Proof} 
Let us first recall some previous results on the multiplicity one problem. In \cite{li2016extension}, Haozhao Li and Bing Wang proved that a mean curvature flow with mean curvature decay must have the tangent flow to be a multiplicity one plane; in \cite{wang2016asymptotic}, Lu Wang proved that the tangent flow of the time $0$ slice of a complete self-shrinker is of multiplicity one. Both results work for surfaces flowing by mean curvature in $\R^3$. 

Because the problems they studied have certain curvature bounds, they proved the ``pseudo-locality" of the mean curvature flow, i.e. if the flow is multi-sheeted graphs at some time, then it must keep being multi-sheeted graphs in the future. Then parabolic maximum principle suggests that those graphs must be pushed away from each other. Thus higher multiplicity can not happen.

In our case, we do not have pseudo-locality, but thanks to Ilmanen \cite{ilmanen1995singularities}, before the first singular time there are many time slices of the mean curvature flow which are multi-sheeted layers away from a small neighbourhood of finitely many points. Each layer is homotopy to the limit self-shrinker by removing finitely many points. We can perturb these specific time slices to reduce the entropy. The perturbations are inherited from those perturbations on the limited self-shrinker $\Sigma$ with multiplicity $m$. 

In order to do this, we develop the notion of entropy instability for self-shrinkers with multiplicities, and then prove that every self-shrinker with multiplicity higher than $1$ must be entropy unstable, and find the perturbation which strictly reduces the entropy. Since at a certain time slice the flow (after parabolic rescaling) is multi-sheeted layers over the self-shrinker, we can apply the same variation on the highest layer to reduce the entropy. This perturbed surface can not generate a singularity modelled by $m\Sigma$. 

The main technical issue is that Colding-Minicozzi's entropy is not continuous with respect to the topology in the space of measures, such as the varifold topology. The time slices after parabolic rescaling are only multi-sheeted layers away from a small neighbourhood of finitely many points, and it is impossible to control the geometry inside those small neighbourhoods of finitely many points. Thus the entropy of those time slices may be achieved at those places with small scale, and then we do not know whether those perturbations would reduce the entropy.

In order to overcome this issue, we introduce the notion of local entropy. Given a compact set $I\subset(0,\infty)$, we define the local entropy
\[\lambda^{[a,b]}(\Sigma)=\sup_{(x_0,t_0)\in\R^{n+1}\times[a,b]}\frac{1}{(4\pi t_0)^{n/2}}\int_\Sigma e^{-\frac{|x-x_0|^2}{4t_0}}d\mu.\] 
The local entropy is also translation invariant and monotone along with the mean curvature flow. However it is not dilation invariant, hence it can not be achieved at very tiny scales, which means the local entropy can not be achieved at those places where the geometry is wild. As a result, local entropy decreases sharply under perturbation. Then the similar dynamic argument rules out the singularity modelled by $m\Sigma$ after perturbation in the long term future.

One more technical remark is that the local entropy argument can not rule out the singularity modelled by $m\Sigma$ immediately. In order to rule out this possibility, we find a small ball lying in the region bounded by those perturbed surfaces, and the parabolic maximum principle tells us that the perturbed surfaces can not shrink to a single point before the small ball shrink to a single point. Note that if there is a singularity modelled by a closed self-shrinker, then the flow must shrink to a single point at the singular time. Hence any singularity modelled by closed self-shrinkers, in particular $m\Sigma$, can not show up immediately. Thus we conclude our main theorem.

Some techniques developed in this paper may have an independent interest. The analysis of local entropy, the construction of the perturbations may be applied to other problems in mean curvature flow and other fields.

We also want to emphasize that we do not care much about the precise regularity of the flow. Thus in this paper we consider all the surfaces to be smooth surfaces, or the perturbed $C^{2,\alpha}$ surfaces. 

\vspace{10pt}
\subsection*{Organization of the Paper}\mbox{}

In Section \ref{S:Preliminary}, we introduce some basic notions for mean curvature flow. 

In Section \ref{S:Entropy and Local Entropy}, we define entropy and local entropy, and study their properties. One key property is the continuity of entropy/local entropy under certain circumstances.

In Section \ref{S:Entropy Instablity}, we introduce the entropy instability defined by Colding-Minicozzi for self-shrinkers and generalize it to self-shrinkers with multiplicity.

In Section \ref{S:Ilmanen's Analysis of MCF Near Singularity}, we sketch Ilmanen's analysis of blow up sequence near a singularity, in particular we illustrate how one can get the local multi-layered property of those time slices.

In Section \ref{S:Perturbation Near Singularities with Multiplicity}, we construct the desired perturbation over the time slices near the singularity.

In Section \ref{S:Proof of Main Theorem}, we prove our main Theorem \ref{Thm:Main Theorem}.

Finally we have an Appendix to collect some results we use.

\vspace{10pt}
\subsection*{Acknowledgement} I am grateful to Professor Bill Minicozzi for his helpful suggestions and comments. I am grateful to Zhichao Wang for helpful discussions. I want to thank Qiang Guang for reading an early version of this paper and his suggestions and comments. I am grateful to the anonymous referee's valuable suggestions and comments.

\section{Preliminary}\label{S:Preliminary}

\subsection{Geometric Measure Theory}\label{SS:Geometric Measure Theory}
We first recall some concepts in geometric measure theory. For more detailed discussion, see \cite{simon1983lecture} and \cite{ilmanen1994elliptic}.

We use $\mc H^k$ to denote the $k$-dimensional Hausdorff measure in $\mb R^n$. Let $X$ be an $\mc H^k$-measurable subset of $\mb R^n$, and let $\theta\colon\mb R^n\to \mb Z^+$ be a locally $\mc H^k$ integrable integer function, whose support is $X$. Then we can define a Radon measure $\mu(X,\theta)=\mc H^k\lfloor \theta$.

We say a Radon measure $\mu$ is \emph{integer $k$-rectifiable} if $\mu$ has $k$-dimensional tangent planes of positive multiplicity $\mu$-almost everywhere. Equivalently, this means that $\mu=\mu(X,\theta)$ for an $X$ which is $\mc H^k$-measurable and countably $k$-rectifiable, where $\theta$ is locally $\mc H^k$-integrable. 

Let $G^k\mb R^n$ be the total space of the Grassmann bundle of all $k$-planes of $\mb R^n$. A \emph{varifold} $V$ is a Radon measure on $G^k\mb R^n$. We can write
\[V(\psi)=\int\psi(x,S)dV(x,S),\]
where $\psi\in C_c^0(G^k\mb R^n,\mb R)$ and for each $x\in\mb R^n$, $S$ is the $k$-planes of $T_x\mb R^n$. Then for a $k$-rectifiable Radon measure $\mu$, we can associate an interger rectifiable $k$-varifold $V_\mu$ to $\mu$ defined by
\[V_\mu(\psi)=\int\psi(x,S)d V_\mu(x,S)=\int\psi(x,T_x\mu)d\mu(x)\]
for all test function $\psi \in C_c^0(G^k\mb R^n,\mb R)$. 

In this paper, we only consider the Radon measures and varifolds of smooth embedded hypersurfaces in $\mb R^{n+1}$, i.e. the varifolds associated with the summation of $\mu(\Sigma,m)$ where $\Sigma$ is a smooth embedded hypersurface in $\mb R^3$ and $m$ is a constant function on $\Sigma$. We will use $m\Sigma$ to denote $\mu(\Sigma,m)$ and the associated varifold $V$ in this case. If $V$ is associated with $\sum_{i=1}^k(\Sigma_i,m_i)$ the Radon measures of finitely many smooth embedded hypersurfaces, then we use $\sum_{i=1}^k m_i\Sigma_i$ to denote this Radon measure and the associated varifold $V$.

\subsection{$F$-Functional and First Variational Formula}

\begin{definition}
Let $\mu$ be an integer $n$-rectifiable Radon measure. Given $(x_0,t_0)\in\R^{n+1}\times(0,\infty)$, the $F_{x_0,t_0}$-functional of $\mu$ is defined to be the integral
\begin{equation}
F_{x_0,t_0}(\mu)=\frac{1}{(4\pi t_0)^{n/2}}\int e^{-\frac{|x-x_0|^2}{4t_0}} d\mu.
\end{equation}
\end{definition}

$F$-functional is a very important quantity in the study of mean curvature flow. Huisken proved the following monotonicity formulas for mean curvature flow.

\begin{theorem}\label{Thm:Huisken's monotonicity formula for F-functional}
{(Huisken's monotonicity formula, \cite[Theorem 3.1]{huisken1990asymptotic}.)} Let $\{M_t\}_{t\in[0,T)}$ be a family of hypersurfaces flowing by mean curvature, then for fixed $(x_0,t_0)\in\mathbb{R}^{n+1}\times (0,\infty)$ and $0\leq t_1\leq t_2<T$,
\[F_{x_0,t_0}(M_{t_2})\leq F_{x_0,t_0+t_2-t_1}(M_{t_1}).\]
If the equality holds for all $0\leq t_1\leq t_2<T$, then $M_t$ is a rescaling of a self-shrinker.
\end{theorem}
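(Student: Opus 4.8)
The plan is to establish the pointwise differential inequality that drives the monotonicity, then integrate it in time. Fix $(x_0,t_0)\in\R^{n+1}\times(0,\infty)$ and, after translating $x_0$ to the origin and relabeling time, set
\[
\Phi(x,t)=\frac{1}{(4\pi(t_0-t))^{n/2}}\,e^{-\frac{|x|^2}{4(t_0-t)}},
\]
the backward heat kernel centered at $(0,t_0)$, defined for $t<t_0$. Note $F_{x_0,t_0+t_2-t_1}(M_{t_1})$ and $F_{x_0,t_0}(M_{t_2})$ are exactly $\int_{M_{t_1}}\Phi(\cdot,\,t_0-(t_0+t_2-t_1))\,d\mu = \int_{M_{t_1}}\Phi(\cdot, t_1 - t_2)\,d\mu$ style evaluations; more cleanly, write $g(t)=\int_{M_t}\Phi(x,t)\,d\mu_t$ and observe that $g(t_1)=F_{x_0,t_0+t_2-t_1}(M_{t_1})$ and $g(t_2)=F_{x_0,t_0}(M_{t_2})$ once the parameters are matched by the shift $t\mapsto t + (t_0-t_2)$. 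So it suffices to show $g$ is non-increasing in $t$.

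The key step is the computation of $\frac{d}{dt}g(t)$. Using the first variation of area along mean curvature flow (the area element evolves by $\partial_t\, d\mu_t = -H^2\, d\mu_t$) together with the chain rule for $\Phi$ along the flow, one gets
\[
\frac{d}{dt}\int_{M_t}\Phi\,d\mu_t = \int_{M_t}\left(\partial_t\Phi + \nabla^{\R^{n+1}}\Phi\cdot(-H\mathbf n) - H^2\,\Phi\right)d\mu_t.
\]
Now one invokes the defining property of the backward heat kernel, $\partial_t\Phi + \Delta_{\R^{n+1}}\Phi = 0$, and splits the full Laplacian into tangential and normal parts along $M_t$; integrating the tangential Laplacian by parts over the (closed, or suitably decaying) surface and completing the square in the normal derivative yields Huisken's identity
\[
\frac{d}{dt}\int_{M_t}\Phi\,d\mu_t = -\int_{M_t}\left|H\mathbf n + \frac{(x-x_0)^\perp}{2(t_0-t)}\right|^2\Phi\,d\mu_t \le 0.
\]
This gives the inequality; integrating from $t_1$ to $t_2$ and matching parameters gives the stated $F$-comparison. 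For the rigidity statement, equality for all $t_1\le t_2$ forces the integrand to vanish identically, i.e. $H\mathbf n = -\frac{(x-x_0)^\perp}{2(t_0-t)}$ on every time slice, which is precisely the condition (after rescaling by $\sqrt{t_0-t}$) that $M_t$ be a homothetically shrinking solution, i.e. a rescaling of a self-shrinker satisfying $H=\langle x,\mathbf n\rangle/2$.

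The main obstacle is bookkeeping rather than conceptual: one must carefully justify the differentiation under the integral sign and the integration by parts. For closed $M_t$ this is immediate, but the theorem is stated for general families of hypersurfaces flowing by mean curvature, so one should either restrict to the closed (or properly embedded with Euclidean volume growth) case — which is all that is needed in this paper, and which is consistent with the later use of Ilmanen's and White's structure theory — or cite the standard cutoff-function argument (as in \cite{huisken1990asymptotic}, \cite{ilmanen1994elliptic}) that handles the non-compact case. The other point requiring a small amount of care is the parameter shift $t\mapsto t+(t_0-t_2)$ relating $g$ to the two $F$-functionals in the statement; this is a purely notational translation once the monotonicity of $g$ is in hand.
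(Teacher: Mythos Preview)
Your sketch is the standard argument and is essentially correct: differentiate $g(t)=\int_{M_t}\Phi\,d\mu_t$ using the evolution of the area element and the backward heat equation for $\Phi$, complete the square to obtain
\[
\frac{d}{dt}\int_{M_t}\Phi\,d\mu_t=-\int_{M_t}\Bigl|H\mathbf n+\frac{(x-x_0)^\perp}{2(\tau-t)}\Bigr|^2\Phi\,d\mu_t\le 0,
\]
and then integrate. One small remark on bookkeeping: with your $\Phi$ centered at time $t_0$, the values $g(t_1)$ and $g(t_2)$ do not literally equal the two $F$-functionals in the statement; the clean choice is to center the backward heat kernel at $\tau=t_0+t_2$, so that the scale at $t=t_2$ is $t_0$ and at $t=t_1$ is $t_0+t_2-t_1$. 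You note this shift yourself, so the argument goes through.

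However, you should be aware that the paper does \emph{not} give its own proof of this theorem. It is stated as a citation to Huisken's original paper \cite[Theorem 3.1]{huisken1990asymptotic} (with the Brakke flow extension attributed to Ilmanen \cite[Lemma 7]{ilmanen1994elliptic}) and then used as a black box. So there is nothing to compare your argument against; what you have written is precisely the classical proof from the cited reference, and it is correct for closed hypersurfaces, which is the only case the paper actually uses.
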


Later Ilmanen also generalized the monotonicity formulas to Brakke flows, see \cite[Lemma 7]{ilmanen1994elliptic}. 

In \cite{colding2012generic}, Colding-Minicozzi have studied the variations of the $F$-functional. They compute the following first variational formula.

\begin{lemma}{\cite[Lemma 3.1]{colding2012generic}}  
Let $\Sigma_s\subset\R^{n+1}$ be a variation of $\Sigma$ with variation vector field $f\mathbf{n}$. If $x_s,t_s$ are variations of $x_0,t_0$ with $x'_0=y,t'_0=h$ respectively, then $\frac{\partial}{\partial s}(F_{x_s,t_s}(\Sigma_s))$ is
\begin{equation}
\frac{1}{(4\pi t_0)^{n/2}}\int_\Sigma \left[f\left(H-\frac{\langle x-x_0,\mathbf{n}\rangle}{2t_0}\right) + h\left(\frac{|x-x_0|^2}{4t_0^2}-\frac{n}{2t_0}\right) + \frac{\langle x-x_0,y\rangle}{2t_0} \right]e^{-\frac{|x-x_0|^2}{4t_0}}d\mu.
\end{equation}
\end{lemma}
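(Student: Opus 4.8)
The plan is to differentiate in $s$ the three sources of $s$-dependence in
\[
F_{x_s,t_s}(\Sigma_s)=(4\pi t_s)^{-n/2}\int_{\Sigma_s}\rho_s\,d\mu_s,\qquad \rho_s(x):=e^{-|x-x_s|^2/(4t_s)},
\]
and add the three derivatives. At $s=0$ I would split $\tfrac{\partial}{\partial s}$ into: (i) the derivative of the normalizing constant $(4\pi t_s)^{-n/2}$; (ii) the derivative of the Gaussian kernel $\rho_s$ with the hypersurface held fixed; and (iii) the derivative coming from the motion of $\Sigma_s$ with the kernel held equal to $\rho_0$.

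For (i), $\tfrac{\partial}{\partial s}(4\pi t_s)^{-n/2}\big|_{s=0}=-\tfrac{n}{2t_0}\,h\,(4\pi t_0)^{-n/2}$, which after factoring out $(4\pi t_0)^{-n/2}e^{-|x-x_0|^2/(4t_0)}$ contributes the $-\tfrac{nh}{2t_0}$ term. For (ii), a direct chain-rule computation gives $\partial_s\rho_s\big|_{s=0}=\big(\tfrac{\langle x-x_0,y\rangle}{2t_0}+\tfrac{|x-x_0|^2}{4t_0^2}\,h\big)\rho_0$, since varying the center contributes $\langle x-x_0,x_0'\rangle/(2t_0)=\langle x-x_0,y\rangle/(2t_0)$ and varying the scale contributes $|x-x_0|^2h/(4t_0^2)$; these produce the $\tfrac{\langle x-x_0,y\rangle}{2t_0}$ term and the remaining half of the $h$-bracket. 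For (iii), I would invoke the first variation of a weighted area functional: for a fixed positive ambient weight $\rho$ and a normal variation field $f\mathbf{n}$, $\tfrac{d}{ds}\big|_{s=0}\int_{\Sigma_s}\rho\,d\mu_s=\int_\Sigma\big(\rho\,\dv_\Sigma(f\mathbf{n})+f\langle\nabla\rho,\mathbf{n}\rangle\big)d\mu=\int_\Sigma f\big(\rho H+\langle\nabla\rho,\mathbf{n}\rangle\big)d\mu$, using $\tfrac{d}{ds}d\mu_s=\dv_{\Sigma_s}(f\mathbf{n})\,d\mu_s$ and $\dv_\Sigma(f\mathbf{n})=fH$ (the tangential gradient of $f$ is orthogonal to $\mathbf{n}$). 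Taking $\rho=\rho_0$ and using $\langle\nabla\rho_0,\mathbf{n}\rangle=-\tfrac{\langle x-x_0,\mathbf{n}\rangle}{2t_0}\rho_0$ yields the bracketed $f$-term $f\big(H-\tfrac{\langle x-x_0,\mathbf{n}\rangle}{2t_0}\big)$.

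Adding (i)--(iii) and pulling out the common factor $(4\pi t_0)^{-n/2}e^{-|x-x_0|^2/(4t_0)}$ reproduces the claimed formula. I do not expect a substantive obstacle; the one step to handle with care is (iii): justifying differentiation under the integral sign (harmless, since the variations are smooth and, for noncompact $\Sigma$, the Gaussian weight decays fast enough for dominated convergence) and, above all, fixing the sign/normalization conventions for $H$ and $\mathbf{n}$ to be exactly those under which self-shrinkers satisfy $H=\langle x,\mathbf{n}\rangle/2$. The built-in consistency check is that at $(x_0,t_0)=(0,1)$ with $y=h=0$ the formula collapses to $\tfrac{1}{(4\pi)^{n/2}}\int_\Sigma f\big(H-\tfrac{\langle x,\mathbf{n}\rangle}{2}\big)e^{-|x|^2/4}d\mu$, which vanishes for all $f$ precisely on self-shrinkers, as it must.
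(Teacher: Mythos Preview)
Your argument is correct and is exactly the standard computation (indeed, it is essentially the computation Colding--Minicozzi carry out in \cite{colding2012generic}). Note, however, that there is nothing to compare against here: the paper does not supply its own proof of this lemma but simply quotes it from \cite[Lemma~3.1]{colding2012generic}, so your write-up already goes beyond what the paper provides.
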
 

It is straightforward to see that this variational formula also holds if we replace $\Sigma$ by $\mu=\sum_{i=1}^k m_i\Sigma_i$ which is a Radon measure of embedded hypersurfaces.

We will use a corollary of this lemma later. cf. \cite[proof of Lemma 7.7]{colding2012generic}.

\begin{corollary}\label{Cor:lambda is achieved in interior}
Suppose $\mu=\sum_{i=1}^k m_i\Sigma_i$ is a Radon measure of closed embedded hypersurfaces. Then given $t_0>0$, $F_{x_0,t_0}(\mu)$ achieves its maximum at $x_0$ which lies in the convex hull of the union of all $\Sigma_i$'s. 
\end{corollary}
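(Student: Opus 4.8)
The plan is to first show that the supremum $\max_{x_0\in\R^{n+1}}F_{x_0,t_0}(\mu)$ is actually attained, and then to argue that any maximizer must lie in the convex hull $K$ of $\bigcup_{i=1}^k\Sigma_i$. For attainment: since each $\Sigma_i$ is closed (hence compact) and each $m_i$ is a constant, $\mu$ has finite total mass and is supported in some fixed ball $B_R$. The map $x_0\mapsto F_{x_0,t_0}(\mu)$ is continuous (dominated convergence, the Gaussian weight being bounded and continuous in $x_0$), and if $|x_0|\ge 2R$ then $|x-x_0|\ge |x_0|/2$ for every $x\in\spt\mu$, so
\[F_{x_0,t_0}(\mu)\le (4\pi t_0)^{-n/2}\,\mu(\spt\mu)\,e^{-|x_0|^2/(16t_0)}\longrightarrow 0\quad\text{as }|x_0|\to\infty.\]
Hence the supremum is finite and attained at some $x_0\in B_{2R}$.

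Next I would locate the maximizer. Suppose, for contradiction, that a maximizer $x_0$ does not lie in $K$ (which is compact, convex and nonempty). Let $x_0'$ be the nearest-point projection of $x_0$ onto $K$; then $x_0\neq x_0'$ and, by the standard obtuse-angle characterization of the projection onto a convex set, $\langle x-x_0',\,x_0-x_0'\rangle\le 0$ for every $x\in K$. Expanding,
\[|x-x_0|^2=|x-x_0'|^2-2\langle x-x_0',\,x_0-x_0'\rangle+|x_0-x_0'|^2\ \ge\ |x-x_0'|^2+|x_0-x_0'|^2\ >\ |x-x_0'|^2\]
for all $x\in K$, in particular for $\mu$-a.e.\ $x$ since $\spt\mu\subset K$. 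Therefore $e^{-|x-x_0|^2/(4t_0)}<e^{-|x-x_0'|^2/(4t_0)}$ $\mu$-a.e., and integrating gives $F_{x_0,t_0}(\mu)<F_{x_0',t_0}(\mu)$, contradicting maximality. Thus every maximizer lies in $K$.

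Alternatively, one can run the argument in the spirit of the proof of \cite[Lemma 7.7]{colding2012generic} using the first variational formula \cite[Lemma 3.1]{colding2012generic}: a maximizer $x_0$ is an interior critical point in the $x_0$-variable, so substituting $f=0$, $h=0$ and arbitrary translation $y$ into that formula forces $\int_{\spt\mu}(x-x_0)\,e^{-|x-x_0|^2/(4t_0)}\,d\mu=0$, i.e. $x_0=\dfrac{\int x\,e^{-|x-x_0|^2/(4t_0)}\,d\mu}{\int e^{-|x-x_0|^2/(4t_0)}\,d\mu}$ is the Gaussian-weighted barycenter of $\mu$, which is manifestly an average of points of $\spt\mu\subset K$. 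The argument is short, and there is no serious obstacle; the one place that genuinely uses the hypothesis is the attainment step, where compactness of the $\Sigma_i$ (finite mass and bounded support) together with the Gaussian decay at infinity is essential — for non-compact $\mu$ the maximum need not be attained, and without attainment there is nothing for the convexity argument to act on.
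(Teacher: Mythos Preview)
Your proposal is correct. Your \emph{alternative} argument via the first variational formula --- setting $f=0$, $h=0$ and reading off that a maximizer $x_0$ must be the Gaussian-weighted barycenter of $\mu$ --- is exactly what the paper does (cf.\ \cite[proof of Lemma~7.7]{colding2012generic}). Your \emph{primary} argument, on the other hand, is a genuinely different and more elementary route: rather than appealing to the variational formula at all, you project a hypothetical maximizer $x_0\notin K$ onto the convex hull $K$ and use the obtuse-angle property of nearest-point projection to get the strict pointwise inequality $|x-x_0|^2>|x-x_0'|^2$ on $\spt\mu$, hence $F_{x_0,t_0}(\mu)<F_{x_0',t_0}(\mu)$. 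This buys you a self-contained proof that avoids computing any variation and would work verbatim for any finite measure with compact support (not just sums of smooth hypersurfaces); the paper's approach, by contrast, ties the conclusion to the first variation structure already developed for the $F$-functional, which is thematically convenient but slightly less general. Both arguments share the same attainment step, and as you correctly flag, compactness of the $\Sigma_i$'s is the essential hypothesis there.
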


\begin{proof}
First note that for fixed $t_0$, since all $\Sigma_i$'s are closed, $\lim_{|x_0|\to\infty}F_{x_0,t_0}(\mu)=0$. Thus $\sup_{x_0\in\R^{n+1}}F_{x_0,t_0}(\mu)$ is achieved at some $\tilde x_0\in \R^{n+1}$. Then by the first variational formula $\frac{\partial}{\partial s}(F_{x_s,t_0}(\mu))=0$, hence
\[\frac{1}{(4\pi t_0)^{n/2}}\int \frac{\langle x-\tilde x_0,y\rangle}{2t_0} e^{-\frac{|x-\tilde x_0|^2}{4t_0}}d\mu=0\]
holds for all $y\in\R^{n+1}$. Thus $\tilde x_0$ must lie in the convex hull of $\bigcup_{i=1}^k\Sigma_i$.
\end{proof}

\section{Entropy and Local Entropy}\label{S:Entropy and Local Entropy}
\begin{definition}
Suppose $\mu$ is an integer $n$-rectifiable Radon measure. The \emph{entropy} $\lambda$ of $\mu$ is defined to be 
\begin{equation}
\lambda(\mu)=\sup_{(x_0,t_0)\in\R^{n+1}\times(0,\infty)}F_{x_0,t_0}(\mu).
\end{equation}
\end{definition}
In particular, if $\mu$ is the Radon measure associated to an embedded hypersurface (with multiplicity one), then this definition coincides with the definition by Colding-Minicozzi in \cite{colding2012generic}. 

The following propositions are straightforward generalizations of results in \cite{colding2012generic}.

\begin{proposition}{\cite[Lemma 7.2 \& Section 7.2]{colding2012generic}}
Suppose $\mu=\sum_{i=1}^k m_i \lambda(\Sigma_i)$ where $m_i$'s are positive integers and $\Sigma_i$'s are closed embedded hypersurfaces in $\mb R^{n+1}$, then the following properties hold:
\begin{enumerate}
\item $F_{x_0,t_0}(\mu)$ is a smooth function of $x_0,t_0$ on $\mb R^{n+1}\times(0,\infty)$;
\item $\lambda(\mu)<\infty$;
\item If $\Sigma_i$'s are all self-shrinkers, then $\lambda(\mu)=F_{0,1}(\mu)=\sum_{i=1}^k m_i\lambda(\Sigma_i)$.
\end{enumerate}
\end{proposition}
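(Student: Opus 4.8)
The plan is to handle the three items in turn, in each case using linearity of the integral, $F_{x_0,t_0}(\mu)=\sum_{i=1}^k m_i F_{x_0,t_0}(\Sigma_i)$, to reduce to the case of a single closed embedded hypersurface, which is \cite[Lemma 7.2]{colding2012generic}. Item (1) is differentiation under the integral sign: each $\Sigma_i$ is compact, and the kernel $(x,x_0,t_0)\mapsto(4\pi t_0)^{-n/2}e^{-|x-x_0|^2/4t_0}$ together with all of its $(x_0,t_0)$-derivatives is continuous and, on compact subsets of $\R^{n+1}\times(0,\infty)$, bounded uniformly in $x\in\Sigma_i$; hence $F_{x_0,t_0}(\Sigma_i)$ is $C^\infty$ on $\R^{n+1}\times(0,\infty)$, and so is the finite sum $F_{x_0,t_0}(\mu)$.

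For (2) I would prove a bound on $F_{x_0,t_0}(\Sigma_i)$ that is uniform in $(x_0,t_0)$, the only delicate regime being $t_0\to 0^+$, where the Gaussian concentrates near $x_0$. Step one is a Euclidean area-ratio bound: since $\Sigma_i$ is smooth and closed there is $R_0>0$ such that $\Sigma_i\cap B_{R_0}(p)$ is a graph of gradient $\le 1$ over $T_p\Sigma_i$ for every $p\in\Sigma_i$; combining the resulting estimate for small radii (distinguishing whether $\dist(x_0,\Sigma_i)$ exceeds the radius) with $\mathcal H^n(\Sigma_i\cap B_r(x_0))\le \mathcal H^n(\Sigma_i)$ for $r\ge R_0$ gives a constant $V_0=V_0(\Sigma_i)$ with $\mathcal H^n(\Sigma_i\cap B_r(x_0))\le V_0 r^n$ for all $x_0\in\R^{n+1}$ and all $r>0$. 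Step two: writing $F_{x_0,t_0}(\Sigma_i)=\int_0^\infty \mathcal H^n\big(\{x\in\Sigma_i:(4\pi t_0)^{-n/2}e^{-|x-x_0|^2/4t_0}>s\}\big)\,ds$, the superlevel set is $\Sigma_i\cap B_{r(s)}(x_0)$ with $r(s)^2=4t_0\log\frac{(4\pi t_0)^{-n/2}}{s}$, so the area-ratio bound together with the substitution $u=(4\pi t_0)^{n/2}s$ removes all dependence on $t_0$ and leaves $F_{x_0,t_0}(\Sigma_i)\le V_0\pi^{-n/2}\int_0^1(\log\tfrac1u)^{n/2}\,du=V_0\pi^{-n/2}\Gamma(\tfrac n2+1)$. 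Summing, $\lambda(\mu)\le \sum_i m_i V_0(\Sigma_i)\pi^{-n/2}\Gamma(\tfrac n2+1)<\infty$.

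For (3) one inequality is immediate: $\lambda(\mu)=\sup_{x_0,t_0}\sum_i m_i F_{x_0,t_0}(\Sigma_i)\le\sum_i m_i\lambda(\Sigma_i)$. For the reverse it suffices to know $\lambda(\Sigma_i)=F_{0,1}(\Sigma_i)$ for each self-shrinker $\Sigma_i$, for then $\sum_i m_i\lambda(\Sigma_i)=\sum_i m_i F_{0,1}(\Sigma_i)=F_{0,1}(\mu)\le\lambda(\mu)$ and equality holds throughout. To prove $\lambda(\Sigma_i)=F_{0,1}(\Sigma_i)$, apply Huisken's monotonicity formula (Theorem \ref{Thm:Huisken's monotonicity formula for F-functional}) to the self-shrinking flow $M_t=\sqrt{-t}\,\Sigma_i$: for any fixed $(x_0,t_0)$ and any $t_1<-1$,
\[F_{x_0,t_0}(\Sigma_i)=F_{x_0,t_0}(M_{-1})\le F_{x_0,\,t_0-1-t_1}(M_{t_1})=F_{x_0/\sqrt{-t_1},\ (t_0-1)/(-t_1)+1}(\Sigma_i),\]
the last equality being the scaling covariance $F_{x_0,t_0}(c\Sigma)=F_{x_0/c,\,t_0/c^2}(\Sigma)$ with $c=\sqrt{-t_1}$. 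Letting $t_1\to-\infty$ and using the continuity from (1) at the interior point $(0,1)$, the right side tends to $F_{0,1}(\Sigma_i)$; hence $F_{x_0,t_0}(\Sigma_i)\le F_{0,1}(\Sigma_i)$ for all $(x_0,t_0)$, i.e.\ $\lambda(\Sigma_i)=F_{0,1}(\Sigma_i)$.

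The main obstacle is the $(x_0,t_0)$-uniform estimate in (2): items (1) and (3) are essentially formal given Huisken's monotonicity and scaling, but controlling $F_{x_0,t_0}$ as $t_0\to 0^+$ genuinely requires the Euclidean area-ratio bound, which rests on the bounded-curvature local graph decomposition of the compact hypersurface and on the (easy but necessary) point that these ratios are bounded for all centers $x_0$, not just for $x_0\in\Sigma_i$.
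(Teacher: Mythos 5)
The paper states this Proposition as a direct citation of Colding--Minicozzi and supplies no proof of its own, calling the multiplicity version a ``straightforward generalization.'' Your proof fills that gap correctly and, as far as I can tell, by the same route Colding--Minicozzi use in the single-hypersurface case ([colding2012generic, Lemma 7.2, Lemma 7.10, Section 7.2]): reduce to one compact hypersurface by linearity of $F$; prove (1) by differentiating under the integral sign; prove (2) via a Euclidean area-ratio bound combined with the layer-cake/coarea computation; and prove (3) by applying Huisken's monotonicity to the self-shrinking flow $\sqrt{-t}\,\Sigma_i$ together with the scaling covariance $F_{x_0,t_0}(c\Sigma)=F_{x_0/c,\,t_0/c^2}(\Sigma)$ and sending $t_1\to-\infty$. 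The layer-cake substitution reproducing $V_0\pi^{-n/2}\Gamma(\tfrac n2+1)$ is correct, and the two inequalities in (3) close the argument. One small imprecision worth tightening: the claim that $\Sigma_i\cap B_{R_0}(p)$ is \emph{a} graph over $T_p\Sigma_i$ for all $p$ is not literally true (a small ball may meet several sheets, or the component through $p$ may have large gradient near $\partial B_{R_0}(p)$); what you actually need, and what compactness and embeddedness do give, is a tubular-neighbourhood radius $\rho_0$ below which the intersection is contained in boundedly many graphs of bounded gradient, from which the uniform $\mathcal H^n(\Sigma_i\cap B_r(x_0))\le V_0 r^n$ follows. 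The conclusion is unaffected.
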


The entropy can be viewed as the supremum of the Gaussian functional $F_{0,1}$ of a Radon measure under all possible translations and dilations. So the entropy detects the global information of a measure from every scale. There is another quantity called \emph{volume growth} also detecting the global information of a measure from every scale. Let $\mu$ be a $n$-rectifiable Radon measure, its volume growth (if $\mu$ is $2$-rectifiable then we say ``area growth") is defined to be 
\[\sup_{R>0,x\in\mb R^{n+1}}\mu(B_R(x))/R^n.\]

The entropy and the volume growth are equivalent quantitatively.

\begin{proposition}\label{Prop: Entropy and area growth are equivalent}
There exists a constant $C>0$ only depending on $n$, such that for any $n$-rectifiable Radon measure $\mu$ defined on $R^{n+1}$,
\[C^{-1}\lambda(\mu)\leq \sup_{R>0,x\in\mb R^{n+1}}\mu(B_R(x))/R^n\leq C\lambda(\mu).\]
\end{proposition}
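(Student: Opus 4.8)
The plan is to establish the two-sided bound by separately proving the easy direction and then the reverse direction, both via direct estimates on the Gaussian weight, following the standard arguments of Colding–Minicozzi (Lemma 7.4 and Section 7.3 in \cite{colding2012generic}).

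First I would prove the lower bound $\lambda(\mu) \geq C^{-1}\sup_{R,x}\mu(B_R(x))/R^n$. Fix any ball $B_R(x_0)$. Choosing the parameters $(x_0, t_0)$ with $t_0 = R^2$ in the definition of the entropy, one gets
\[
\lambda(\mu) \geq F_{x_0,R^2}(\mu) = \frac{1}{(4\pi R^2)^{n/2}}\int e^{-\frac{|x-x_0|^2}{4R^2}}\,d\mu \geq \frac{1}{(4\pi R^2)^{n/2}} e^{-1/4}\,\mu(B_R(x_0)),
\]
since on $B_R(x_0)$ the exponent satisfies $|x-x_0|^2/(4R^2) \leq 1/4$. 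This gives $\mu(B_R(x_0))/R^n \leq (4\pi)^{n/2} e^{1/4}\lambda(\mu)$, and taking the supremum over $R>0$ and $x_0$ yields one inequality with an explicit dimensional constant.

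For the upper bound $\lambda(\mu) \leq C\sup_{R,x}\mu(B_R(x))/R^n$, I would fix arbitrary $(x_0,t_0)$ and decompose $\mathbb{R}^{n+1}$ into the dyadic annuli $A_j = B_{2^{j+1}\sqrt{t_0}}(x_0)\setminus B_{2^j\sqrt{t_0}}(x_0)$ for $j\geq 0$ together with the ball $B_{\sqrt{t_0}}(x_0)$. Writing $V := \sup_{R,x}\mu(B_R(x))/R^n$, one has $\mu(B_{2^{j+1}\sqrt{t_0}}(x_0)) \leq V (2^{j+1}\sqrt{t_0})^n$, while on $A_j$ the Gaussian weight is bounded by $e^{-4^j/4}$. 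Hence
\[
F_{x_0,t_0}(\mu) \leq \frac{1}{(4\pi t_0)^{n/2}}\left( V t_0^{n/2} + \sum_{j\geq 0} e^{-4^j/4} V (2^{j+1}\sqrt{t_0})^n \right) = \frac{V}{(4\pi)^{n/2}}\left(1 + \sum_{j\geq 0} 2^{(j+1)n} e^{-4^j/4}\right),
\]
and the series converges to a constant depending only on $n$. Taking the supremum over $(x_0,t_0)$ gives $\lambda(\mu) \leq C_n V$, completing the proof.

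The argument is essentially routine once the dyadic decomposition is set up; there is no serious obstacle. The only point requiring a little care is bookkeeping the dimensional constants and confirming the geometric-type series $\sum_j 2^{(j+1)n} e^{-4^j/4}$ converges (it does, since the super-exponential decay $e^{-4^j/4}$ dominates the exponential growth $2^{(j+1)n}$ for every fixed $n$). One should also note that the estimates never use rectifiability or the hypersurface structure of $\mu$ — only that $\mu$ is a Radon measure — so the proposition holds at the stated level of generality, and in particular applies to the measures $\mu = \sum_i m_i\Sigma_i$ that appear later in the paper.
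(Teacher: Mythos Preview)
Your proof is correct and follows the same overall strategy as the paper: bound the Gaussian weight below on a single ball for one direction, and decompose space and sum for the other. The first inequality is proved identically (choose $t_0=R^2$). For the second inequality the paper takes a slightly different route: it first uses the translation and scaling invariance of both $\lambda$ and the volume growth to reduce to showing $\int e^{-|x|^2}\,d\mu \leq C\sup_{x,R}\mu(B_R(x))/R^n$, and then covers $\mathbb{R}^{n+1}$ by the unit balls $\{B_2(y)\}_{y\in\mathbb{Z}^{n+1}}$, bounding the Gaussian on each by $Ce^{-|y|^2}$ and summing $\sum_{y\in\mathbb{Z}^{n+1}} e^{-|y|^2}<\infty$. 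Your concentric dyadic-annulus decomposition accomplishes the same thing without first normalizing, and the resulting series $\sum_j 2^{(j+1)n}e^{-4^j/4}$ converges for the reason you state. Both arguments are routine and equivalent in strength; your version has the minor advantage of working directly at arbitrary $(x_0,t_0)$, while the paper's lattice cover makes the dimensional constant slightly more transparent. Your closing remark that rectifiability is never used is also correct and worth noting.
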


\begin{proof}
The proof is the same as the proof of \cite[Theorem 2.2]{sun2018singularities}. On one hand, for any $y\in\mb R^{n+1}$ and $R>0$, we have
\[\lambda(\Sigma)\geq \frac{1}{(4\pi t)^{n/2}}\int e^{\frac{-|x-y|^2}{4t}}\chi_{B_R(y)}d\mu \geq \frac{1}{(4\pi t)^{n/2}}e^{-R^2/4t}\mu(B_R(y)).\]
Then choose $t=R^2$ we have 
\[\frac{\mu(B_R(y))}{R^n}\leq C\lambda(\Sigma),\]
where $C$ is a universal constant.

On the other hand, since the statement is translation and scaling invariant, we only need to prove $\int e^{-|x|^2}d\mu\leq C \sup_{x\in\mb R^{n+1}}\sup_{R>0}\frac{\mu(B_R(x))}{R^n}$ to conclude the second statement. 
\begin{equation}
\begin{split}
\int e^{-|x|^2}d\mu&\leq\sum_{y\in\mb Z^{n+1}}\int e^{-|x|^2}\chi_{B_2(y)}d\mu\\
&\leq C\sum_{y\in\mb Z^{n+1}} e^{-|y|^2}\mu(B_2(y))\leq C\sum_{y\in\mb Z^{n+1}}e^{-|y|^2}\mu(B_2(y))\\
&\leq C \sup_{x\in\mb R^{n+1}}\sup_{R>0}\frac{\mu(B_R(x))}{R^n}.
\end{split}
\end{equation}
Here $C$ varies from line to line, but it is always a universal constant. Then we conclude this proposition.
\end{proof}

\vspace{5pt}
Now we are going to generalize the entropy to local entropy. 

\begin{definition}
Suppose $\mu$ is an integer $n$-rectifiable Radon measure. Given a subset $U\subset \mb R^{n+1}$ and $I\subset (0,\infty)$, we define the \emph{local entropy} $\lambda^I_U$ to be
\begin{equation}
\lambda^I_U(\mu)=\sup_{(x_0,t_0)\in U\times I}F(x_0,t_0)(\mu).
\end{equation}
If $U=\mb R^{n+1}$, we simplify the notation to be $\lambda^I(\mu)$.
\end{definition}

We have the following formulas for the local entropy.

\begin{proposition}\label{Prop:entropy sum and sum of entropy}
\[\lambda^I_U(m\Sigma)=m\lambda^I_U(\Sigma),\]
\[\lambda^I_U(\sum_{i=1}^k m_i\Sigma_i)\leq \sum_{i=1}^k m_i \lambda^I_U(\Sigma_i),\]
and the equality of the second formula holds if and only if all the $\Sigma_i$'s entropy are achieved at the same point $(x_0,t_0)$. 
\end{proposition}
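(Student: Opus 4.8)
The plan is to verify the two identities directly from the definition of $F_{x_0,t_0}$ and $\lambda^I_U$. For the first formula, observe that $F_{x_0,t_0}(m\Sigma) = m\, F_{x_0,t_0}(\Sigma)$ pointwise in $(x_0,t_0)$, since the measure $\mu(\Sigma,m)$ is just $m$ times the measure of $\Sigma$ and the Gaussian weight $e^{-|x-x_0|^2/4t_0}/(4\pi t_0)^{n/2}$ does not depend on the multiplicity. Taking the supremum over $(x_0,t_0)\in U\times I$ and pulling the positive constant $m$ out of the supremum gives $\lambda^I_U(m\Sigma)=m\lambda^I_U(\Sigma)$. The same computation works verbatim when $\Sigma$ is replaced by a finite sum $\sum_i m_i\Sigma_i$.

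For the second formula, I would first note the pointwise additivity $F_{x_0,t_0}\big(\sum_{i=1}^k m_i\Sigma_i\big)=\sum_{i=1}^k m_i F_{x_0,t_0}(\Sigma_i)$, again immediate from linearity of the integral in the measure. Then for each fixed $(x_0,t_0)\in U\times I$,
\[
\sum_{i=1}^k m_i F_{x_0,t_0}(\Sigma_i)\le \sum_{i=1}^k m_i \sup_{(x,t)\in U\times I} F_{x,t}(\Sigma_i)=\sum_{i=1}^k m_i \lambda^I_U(\Sigma_i),
\]
and taking the supremum of the left-hand side over $(x_0,t_0)\in U\times I$ yields the inequality $\lambda^I_U\big(\sum_i m_i\Sigma_i\big)\le \sum_i m_i\lambda^I_U(\Sigma_i)$. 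This is the standard fact that the sup of a sum is at most the sum of the sups, weighted by the positive coefficients $m_i$.

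The only point requiring a little care is the equality case. If all the $\Sigma_i$'s achieve their local entropy at a common point $(x_0^\ast,t_0^\ast)\in U\times I$ — note such a maximizing point exists because, for $\mu$ a finite sum of closed embedded hypersurfaces, $F_{x_0,t_0}(\mu)$ is continuous on $U\times I$ and (as in Corollary~\ref{Cor:lambda is achieved in interior}) the sup is attained — then evaluating the pointwise identity at $(x_0^\ast,t_0^\ast)$ gives $\lambda^I_U\big(\sum_i m_i\Sigma_i\big)\ge \sum_i m_i F_{x_0^\ast,t_0^\ast}(\Sigma_i)=\sum_i m_i\lambda^I_U(\Sigma_i)$, which combined with the inequality already proved forces equality. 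Conversely, if equality holds, let $(x_0^\ast,t_0^\ast)$ be a maximizer for $\sum_i m_i\Sigma_i$; then $\sum_i m_i\lambda^I_U(\Sigma_i)=\sum_i m_i F_{x_0^\ast,t_0^\ast}(\Sigma_i)\le \sum_i m_i\lambda^I_U(\Sigma_i)$, and since each term satisfies $F_{x_0^\ast,t_0^\ast}(\Sigma_i)\le\lambda^I_U(\Sigma_i)$ with $m_i>0$, every term must be an equality, i.e.\ each $\Sigma_i$ achieves its local entropy at $(x_0^\ast,t_0^\ast)$. The mild subtlety — really the only obstacle worth flagging — is making sure the maximizers are genuinely attained rather than merely approached, which is where one invokes the closedness of the $\Sigma_i$ and the continuity/decay of $F$; everything else is bookkeeping.
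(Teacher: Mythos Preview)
Your proof is correct and follows essentially the same approach as the paper: use linearity of $F_{x_0,t_0}$ in the measure, then the elementary fact that the supremum of a sum is at most the sum of the suprema. In fact you supply more detail than the paper, which dispatches the equality case with ``straightforward to see''; your remark that attainment of the supremum is needed for the ``only if'' direction is a fair point that the paper leaves implicit.
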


\begin{proof}
The first identity is straightforward. For the second inequality, note that
\[\lambda(\sum_{i=1}^k m_i \Sigma_i)= \sup  F_{x_0,t_0}(\sum_{i=1}^k m_i \Sigma_i)\leq\sum_{i=1}^k m_i \sup F_{x_0,t_0}(\Sigma_i)\leq  \sum_{i=1}^k m_i \lambda(\Sigma_i).\]
When the equality hold is straightforward to see.
\end{proof}

Later we will study the blow up sequence of a mean curvature flow. So we also need to study how the local entropy changes after a parabolic rescaling. Recall that a parabolic rescaling of the flow $M_t$ at $(y,s)\in\mathbb{R}^{n+1}\times[0,T]$ of scale $\alpha$ is defined to be
\begin{equation}
M_{t}^{\alpha,(y,s)}\colon=\alpha(M_{s+\alpha^{-2} t}-y),
\end{equation}
which means that we first translate $M_{s+\alpha^{-2} t}$ by $y$ and then rescale it by $\alpha$ in $\mb R^{n+1}$. We will omit $(y,s)$ in the notation if the spacetime point is clear. 

Entropy is dilation invariant, hence it does not change under the parabolic rescaling. However local entropy is not dilation invariant. Let us first study how dilation changes the local entropy. Direct computation gives that

\[F_{x_0,t_0}(\alpha(\Sigma-y))=F_{\frac{x_0}{\alpha}+y,\frac{t_0}{\alpha^2}}(\Sigma),\]

So taking the supremum of $x_0\in\mb R^{n+1}$ gives

\begin{equation}\label{E:local entropy change under rescaling}
\lambda^{t_0}(\alpha(\Sigma-y))=\lambda^{\alpha^{-2}t_0}(\Sigma).
\end{equation}

Applying this identity to the parabolic rescaling of mean curvature flow gives

\begin{equation}\label{E:local entropy change under rescaling for MCF}
\lambda^{t_0}(M_{t}^{\alpha})=\lambda^{\alpha^{-2}t_0}(M_{s+\alpha^{-2} t}).
\end{equation}

\subsection{Continuity of Local Entropy}
In general the entropy is not continuous for Radon measures of embedded hypersurfaces. There are two reasons. First, the entropy is the supremum among all possible translations, so for a sequence of Radon measures of hypersurfaces, the entropy may be achieved at $x_i$ for $|x_i|\to \infty$, but the limit measure has entropy in a bounded region, then entropy cannot be continuous; second, the entropy is the supremum among all possible dilations, so for a sequence of Radon measures of hypersurfaces, the entropy may be achieved at $t_i$ for $t_i\to 0$, but the limit measure may have entropy in a bounded scale, then the entropy cannot be continuous.

However, we can show that the local entropy is continuous for a very wide family of Radon measures. We hope these continuity properties may serve as tools to study other problems which are related to mean curvature flow.

\begin{lemma}\label{L:F-functional is continuous}
Let $\mu_i$ be a sequence of $n$-rectifiable Radon measures and $\mu$ be a $n$-rectifiable Radon measure, with uniformly bounded entropy. Suppose $\mu_i\to \mu$ as Radon measure. Then given $(x_0,t_0)\in\mb R^n\times(0,\infty)$, $F_{x_0,t_0}(\mu_i)\to F_{x_0,t_0}(\mu)$.
\end{lemma}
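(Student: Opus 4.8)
The plan is to prove convergence of the Gaussian-weighted integrals by splitting $\R^{n+1}$ into a large ball $B_R(x_0)$ and its complement, handling the exterior piece by a tail estimate that uses only the uniform entropy bound, and the interior piece by weak convergence of Radon measures against a compactly supported continuous test function. Fix $(x_0,t_0)$; after a translation we may as well assume $x_0 = 0$. Write $\phi(x) = (4\pi t_0)^{-n/2} e^{-|x|^2/(4t_0)}$, so that $F_{0,t_0}(\nu) = \int \phi\, d\nu$ for any $\nu$.

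First I would dispose of the tail. By Proposition \ref{Prop: Entropy and area growth are equivalent}, the uniform entropy bound $\lambda(\mu_i), \lambda(\mu) \le \Lambda$ gives a uniform area-growth bound $\mu_i(B_\rho(y)) \le C\Lambda\rho^n$ for all $\rho > 0$, $y \in \R^{n+1}$, and likewise for $\mu$. Covering the annulus $B_{(k+1)R}(0)\setminus B_{kR}(0)$ by boundedly many balls of radius $R$ and using $\phi \le (4\pi t_0)^{-n/2} e^{-k^2 R^2/(4t_0)}$ there, one gets
\[
\int_{\R^{n+1}\setminus B_R(0)} \phi \, d\mu_i \;\le\; C(n,t_0)\,\Lambda\sum_{k\ge 1} (k+1)^{n} R^{n} e^{-k^2 R^2/(4t_0)} \;=:\; \eta(R),
\]
where $\eta(R) \to 0$ as $R\to\infty$, and the same bound holds with $\mu_i$ replaced by $\mu$. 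So, given $\eps > 0$, choose $R$ with $\eta(R) < \eps/3$, uniformly in $i$.

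Next I would handle the interior. Pick a cutoff $\chi \in C_c^0(\R^{n+1})$ with $0 \le \chi \le 1$, $\chi \equiv 1$ on $B_R(0)$, and $\spt\chi \subset B_{2R}(0)$. Then $\chi\phi \in C_c^0(\R^{n+1})$, so $\mu_i \to \mu$ as Radon measures gives $\int \chi\phi\, d\mu_i \to \int \chi\phi\, d\mu$. Combining with the tail estimate,
\[
\bigl| F_{0,t_0}(\mu_i) - F_{0,t_0}(\mu)\bigr|
\le \Bigl|\int \chi\phi\,d\mu_i - \int \chi\phi\,d\mu\Bigr|
+ \int_{\R^{n+1}\setminus B_R(0)}\!\!\phi\,d\mu_i
+ \int_{\R^{n+1}\setminus B_R(0)}\!\!\phi\,d\mu,
\]
and for $i$ large the first term is below $\eps/3$ while the last two are each below $\eps/3$ by the choice of $R$. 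This proves $F_{0,t_0}(\mu_i) \to F_{0,t_0}(\mu)$.

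The only real obstacle is the tail estimate: one must make sure the exterior contribution is controlled \emph{uniformly} in $i$, and this is exactly where the hypothesis of uniformly bounded entropy (equivalently, uniform polynomial volume growth via Proposition \ref{Prop: Entropy and area growth are equivalent}) is essential — without it the mass of $\mu_i$ could escape to infinity and the Gaussian integrals need not converge even though the measures converge weakly. Everything else is the standard "compact part by weak convergence, tail by uniform integrability" argument and requires no delicate computation.
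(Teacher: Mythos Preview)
Your proof is correct and follows essentially the same approach as the paper's: split into a ball and its complement, control the tail uniformly via the entropy bound (equivalently, polynomial area growth), and pass to the limit on the interior by weak convergence. The only cosmetic differences are that the paper uses geometric annuli $B_{R^{j+1}}\setminus B_{R^j}$ rather than your arithmetic ones, and you are slightly more careful in introducing a cutoff $\chi$ so that the interior test function is genuinely in $C_c^0$.
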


\begin{proof}
We denote $\mu$ by $\mu_0$. $\mu_i\to \mu$ implies that for any $R>0$, 
\[\frac{1}{(4\pi t_0)^{n/2}}\int_{B_R(x_0)} e^{-\frac{|x-x_0|^2}{4t_0}} d\mu_i\to \frac{1}{(4\pi t_0)^{n/2}}\int_{B_R(x_0)} e^{-\frac{|x-x_0|^2}{4t_0}} d\mu.\]
So we only need to prove that 
\[\frac{1}{(4\pi t_0)^{n/2}}\int_{\mb R^{n+1}\backslash B_R(x_0)} e^{-\frac{|x-x_0|^2}{4t_0}} d\mu_i\to 0\]
uniformly as $R\to \infty$. This follows from the computation
\begin{equation}\label{eq:entropyupperbound}
\begin{split}
\frac{1}{(4\pi t_0)^{n/2}}\int_{\mb R^{n+1}\backslash B_R(x_0)} e^{-\frac{|x-x_0|^2}{4t_0}} d\mu_i&=\frac{1}{(4\pi t_0)^{n/2}}\sum_{j=1}^\infty \int_{\mb B_{R^{j+1}}(x_0)\backslash B_{R^j}(x_0)} e^{-\frac{|x-x_0|^2}{4t_0}} d\mu_i\\
&\leq \frac{1}{(4\pi t_0)^{n/2}}\sum_{j=1}^\infty \int_{\mb B_{R^{j+1}}(x_0)\backslash B_{R^j}(x_0)} e^{\frac{-R^{2j}}{4t_0}} d\mu_i\\
&\leq \frac{1}{(4\pi t_0)^{n/2}} \sum_{j=1}^\infty CR^{2j+2}e^{\frac{-R^{2j}}{4t_0}}
\end{split}
\end{equation}
where $C$ varies line by line. The third inequality comes from the fact that all $\mu_i$ has uniform entropy bound, hence uniform area growth (see. Proposition \ref{Prop: Entropy and area growth are equivalent}). So the last term in \eqref{eq:entropyupperbound} converges to $0$ as $R\to \infty$. Then we conclude the proof.
\end{proof}

Note that it is straightforward to see that the above Lemma holds for Radon measures of closed embedded hypersurfaces all lying in a fixed ball $B_R$, even without uniform entropy bound. From now on we will only consider the case that $\mu_i$ and $\mu$ are Radon measures of closed hypersurfaces lying in a ball $B_R$. Then the uniform entropy bound condition is not necessary.

\begin{lemma}\label{L:local entropy is continuous compact interval}
Let $R>0$ and $I$ be a compact subset of $(0,\infty)$. Let $\mu_i$, $\mu$ be $n$-rectifiable Radon measures of finitely many closed embedded hypersurfaces inside $B_R$. Suppose $\mu_i\to \mu$ as Radon measure, then $\lambda^I(\mu_i)\to \lambda^I(\mu)$.
\end{lemma}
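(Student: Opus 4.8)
The plan is to reduce the statement about the local entropy $\lambda^I$ to the pointwise convergence of $F_{x_0,t_0}$ supplied by Lemma~\ref{L:F-functional is continuous}, together with an equicontinuity argument that lets us pass from pointwise convergence to uniform convergence on the compact parameter domain. The first step is to note that because all the hypersurfaces involved lie in the fixed ball $B_R$, Corollary~\ref{Cor:lambda is achieved in interior} tells us that for each fixed $t_0\in I$ the supremum of $F_{x_0,t_0}$ over $x_0\in\R^{n+1}$ is attained at some point in the convex hull of the support, hence inside $B_R$. Consequently, writing $K:=\Clos(B_R)\times I$, which is a compact subset of $\R^{n+1}\times(0,\infty)$, we have $\lambda^I(\mu)=\sup_{(x_0,t_0)\in K}F_{x_0,t_0}(\mu)$ and likewise for each $\mu_i$. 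So it suffices to prove that $\sup_K F_{x_0,t_0}(\mu_i)\to\sup_K F_{x_0,t_0}(\mu)$.

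The second step is to show the family $\{F_{\cdot,\cdot}(\mu_i)\}_i$ is uniformly equicontinuous on $K$. Since all the measures are supported in $B_R$, the masses $\mu_i(\R^{n+1})=\mu_i(B_R)$ are uniformly bounded (they converge to $\mu(B_R)$), and on the compact set $K$ the kernel $(x,x_0,t_0)\mapsto (4\pi t_0)^{-n/2}e^{-|x-x_0|^2/4t_0}$ together with its $(x_0,t_0)$-derivatives is bounded uniformly for $x\in B_R$, $(x_0,t_0)\in K$. Differentiating under the integral sign then gives a uniform bound on $|\nabla_{x_0,t_0}F_{x_0,t_0}(\mu_i)|$ over $K$, independent of $i$; hence the family is uniformly Lipschitz, in particular equicontinuous, on $K$.

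The third step combines the two. By Lemma~\ref{L:F-functional is continuous}, $F_{x_0,t_0}(\mu_i)\to F_{x_0,t_0}(\mu)$ for every $(x_0,t_0)$; an equicontinuous family that converges pointwise on a compact set converges uniformly there (Arzel\`a--Ascoli, or a direct $\eps/3$ argument using a finite net in $K$). Uniform convergence of $F_{\cdot,\cdot}(\mu_i)$ to $F_{\cdot,\cdot}(\mu)$ on $K$ immediately yields $\sup_K F_{x_0,t_0}(\mu_i)\to\sup_K F_{x_0,t_0}(\mu)$, which is exactly $\lambda^I(\mu_i)\to\lambda^I(\mu)$.

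I expect the only genuinely delicate point to be confirming that the supremum defining $\lambda^I$ really is attained over a fixed compact set of $x_0$'s uniformly in $i$ — this is where the hypothesis that everything sits inside $B_R$ is essential, and it is handled cleanly by Corollary~\ref{Cor:lambda is achieved in interior}. Everything else (the uniform mass bound, the equicontinuity via differentiation under the integral, and the pointwise-to-uniform upgrade) is routine. One should also remark that $I$ being compact in $(0,\infty)$, and not merely in $[0,\infty)$, is used to keep $t_0$ bounded away from $0$, so that the factor $(4\pi t_0)^{-n/2}$ and the kernel's derivatives stay bounded on $K$.
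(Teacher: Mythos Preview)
Your proof is correct and follows essentially the same approach as the paper: both invoke Corollary~\ref{Cor:lambda is achieved in interior} to reduce the supremum to the compact set $\overline{B_R}\times I$, and then pass from convergence of $F_{x_0,t_0}$ to convergence of the suprema. The paper's proof is terser at the final step --- it simply asserts the convergence of suprema ``by the continuity of $F_{x_0,t_0}$ functional'' --- whereas you supply the equicontinuity and Arzel\`a--Ascoli argument that actually justifies this passage; your version is therefore the more complete one.
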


\begin{proof}
Recall the corollary of the first variational formula (see Corollary \ref{Cor:lambda is achieved in interior}) gives that, for a fixed $t_0\in(0,\infty)$, the supremum of $F_{x_0,t_0}(\mu)$ for $x_0\in \mb R^{n+1}$ is achieved in the convex hull of $\mu$, i.e. inside the ball $B_R$. Hence $\lambda^{t_0}(\mu)$ is actually a maximum over a compact subset of $\mathbb{R}^{n+1}\times(0,\infty)$. Thus by the continuity of $F_{x_0,t_0}$ functional we get that
\[\lambda^I(\mu_i)=\sup_{(x_0,t_0)\subset B_R\times I}F_{x_0,t_0}(\mu_i)\to \sup_{(x_0,t_0)\subset B_R\times I}F_{x_0,t_0}(\mu)=\lambda^I(\mu).\]
\end{proof}

The above Lemma suggests that if we only need to take the supremum over a compact scale of $t_0$, then the local entropy is continuous. Next Lemma shows that the absence of a very tiny scale is enough for the continuity of the local entropy.

\begin{lemma}\label{L:local entropy is continuous open interval}
Let $R>0$ and $a\in(0,1)$. Let $\mu_i$, $\mu$ be $n$-rectifiable Radon measures of finitely many closed embedded hypersurfaces inside $B_R$. Suppose $\mu_i\to \mu$ as Radon measure, then 
\[\lambda^{[a,\infty)}(\mu_i)\to \lambda^{[a,\infty)}(\mu).\]
\end{lemma}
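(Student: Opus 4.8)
The plan is to reduce the statement to the already-proven Lemma \ref{L:local entropy is continuous compact interval} by showing that the large-scale tail of the supremum is negligible and stable. Concretely, since all the $\mu_i$ and $\mu$ are supported in the fixed ball $B_R$, they have uniformly bounded mass $\mu_i(B_R)\le C_R$ (Radon convergence forces this), hence uniformly bounded entropy and uniformly bounded area growth. For a large scale $t_0$, Corollary \ref{Cor:lambda is achieved in interior} tells us the $x_0$-supremum of $F_{x_0,t_0}$ is attained inside $B_R$, so for $|x-x_0|\le 2R$ we have $e^{-|x-x_0|^2/4t_0}\le 1$, giving the crude bound
\[
\lambda^{t_0}(\nu)=\sup_{x_0\in B_R}F_{x_0,t_0}(\nu)\le \frac{\mu(B_R)}{(4\pi t_0)^{n/2}}\le \frac{C_R}{(4\pi t_0)^{n/2}}
\]
for $\nu\in\{\mu,\mu_1,\mu_2,\dots\}$. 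Thus there is a scale $T_0=T_0(R,C_R,a)<\infty$ such that $\lambda^{[T_0,\infty)}(\nu)<\tfrac12\lambda^{\{a\}}(\nu)\le\tfrac12\lambda^{[a,\infty)}(\nu)$ for every such $\nu$ (here one uses that $\lambda^{\{a\}}(\nu)$ is bounded below by a positive constant, e.g. by $F_{0,a}$ applied to one nonempty component, uniformly in $i$ once $i$ is large, since $\mu_i\to\mu$ and $\mu\ne 0$). Consequently the supremum defining $\lambda^{[a,\infty)}$ is never approached on the tail $[T_0,\infty)$, so
\[
\lambda^{[a,\infty)}(\mu_i)=\lambda^{[a,T_0]}(\mu_i),\qquad \lambda^{[a,\infty)}(\mu)=\lambda^{[a,T_0]}(\mu),
\]
at least for all $i$ sufficiently large.

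Once this truncation is in place the conclusion is immediate: $[a,T_0]$ is a compact subset of $(0,\infty)$, so Lemma \ref{L:local entropy is continuous compact interval} applies directly and gives $\lambda^{[a,T_0]}(\mu_i)\to\lambda^{[a,T_0]}(\mu)$, hence $\lambda^{[a,\infty)}(\mu_i)\to\lambda^{[a,\infty)}(\mu)$. (If one prefers not to invoke a uniform lower bound on $\lambda^{\{a\}}(\mu_i)$, one can instead argue: choose $T_0$ so that $\lambda^{[T_0,\infty)}(\nu)<\lambda^{[a,\infty)}(\mu)-\delta$ for a fixed small $\delta$ and all $\nu$, then show $|\lambda^{[a,T_0]}(\mu_i)-\lambda^{[a,\infty)}(\mu_i)|$ is small by comparing both against $\lambda^{[a,T_0]}(\mu)$ via Lemma \ref{L:local entropy is continuous compact interval}; the bookkeeping is routine.)

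The one place demanding care — the main obstacle — is making the tail estimate genuinely uniform in $i$ and simultaneously keeping the threshold $T_0$ below the value of $\lambda^{[a,\infty)}$. The uniform mass bound $\mu_i(B_R)\le C_R$ must be extracted from Radon convergence together with the fixed-support hypothesis (e.g. test against a cutoff function that is $1$ on $B_R$), and one must check that $\mu\ne 0$ so that $\lambda^{[a,\infty)}(\mu)>0$; the degenerate case $\mu=0$ can be handled separately (then $\mu_i\to 0$ and all local entropies tend to $0$, or is simply excluded since $\mu$ is assumed to be a Radon measure of finitely many closed hypersurfaces, which one may take to be nonempty). With the uniform mass bound in hand, the Gaussian-tail decay $(4\pi t_0)^{-n/2}\to 0$ does the rest, and no finer geometric control on the $\mu_i$ inside $B_R$ is needed — which is exactly why moving from a compact scale interval to $[a,\infty)$ costs nothing.
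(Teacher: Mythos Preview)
Your proof is correct and follows essentially the same approach as the paper: reduce to Lemma \ref{L:local entropy is continuous compact interval} by showing that the supremum over $[a,\infty)$ is attained on a fixed compact interval $[a,T_0]$, uniformly in $i$, using that the large-$t_0$ contribution decays like $(4\pi t_0)^{-n/2}$ against a uniform mass bound while the value at a fixed scale stays bounded below. The paper phrases the tail estimate via the rescaling identity $\lambda^{t\alpha^{-2}}(\mu_i)=\lambda^{t}(\mu_i^\alpha)$ and compares against $F_{0,1}(\mu_i)$, whereas you bound $F_{x_0,t_0}$ directly by $\nu(B_R)/(4\pi t_0)^{n/2}$ and compare against $\lambda^{\{a\}}$; these are the same idea, and your version is arguably more direct (indeed the bound $e^{-|x-x_0|^2/4t_0}\le 1$ makes the invocation of Corollary \ref{Cor:lambda is achieved in interior} unnecessary).
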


\begin{proof}
Denote $\mu$ by $\mu_0$ to simplify the notation. We only need to prove that there exists $b>0$ and $j>0$ such that 
\[\lambda^{[a,\infty)}(\mu_i)=\lambda^{[a,b]}(\mu_i)\ \ \ \text{for $i=0$ and $i\geq j$}.\]
Then the lemma is an application of Lemma \ref{L:local entropy is continuous compact interval}.

Since $F_{0,1}(\mu_i)\to F_{0,1}(\mu_0)$, there exists $j>0$ such that for $i>j$ we know that $F_{0,1}(\mu_i)\geq F_{0,1}(\mu_0)/2$. By (\ref{E:local entropy change under rescaling}), for $0< \alpha <1$ we have $\lambda^{t\alpha^{-2}}(\mu_i)=\lambda^t(\mu_i^\alpha)$. Note
\[\lambda^{t}(\mu_i^\alpha)=\sup_{x_0\in\mb R^{n+1}}F_{x_0,t}(\mu_i^\alpha)= \sup_{x_0\in\mb R^{n+1}}\frac{1}{4\pi t}\int e^{-\frac{|x-x_0|^2}{4t}}d\mu^\alpha_i\leq \frac{C}{t}\mu^\alpha_i(B_\alpha)\leq \frac{C}{t}\alpha^n.\]
Here in the last inequality we note that $\mu_i^\alpha$ is supported on $B_{\alpha R}$, and $\mu_i\to \mu$ implies the uniform volume bound. Thus, if we choose $\alpha$ small enough such that $C\alpha^n< F_{0,1}(\mu_0)/2$, then for $t\geq1$, 
\[\lambda^{t\alpha^{-2}}(\mu_i)< F_{0,1}(\mu_0)/2\leq F_{0,1}(\mu_i)\leq \lambda^{1}(\mu_i).\]
Then we get
\[\lambda^{[a,\infty)}(\mu_i)=\lambda^{[a,\alpha^{-2}]}(\mu_i) \text{ for $i=0$ and $i\geq j$}.\]
\end{proof}

As we mentioned before, in general the continuity of entropy does not hold even for the closed hypersurfaces. This is because the convergence of Radon measure may have a very complicated part on a small scale, which contributes to large entropy but vanishes in the limit. Thus in general the entropy is only lower semi-continuous. However, we can still prove the continuity of entropy under a certain condition that those complicated small scales can be ruled out.

Let us first introduce a notation. Given a closed hypersurface $\Sigma\subset\mathbb{R}^{n+1}$ and let $f$ be a smooth function on $\Sigma$, we define a new hypersurface $\Sigma_f$ to be
\[\Sigma_f\colon=\{x\colon x\in\Sigma+f\mathbf{n}\}.\]

\begin{lemma}\label{L:entropy is continuous for perturbation}
Suppose $\Sigma$ is a closed hypersurface in $\mathbb{R}^{n+1}$, and $f$ is a smooth function on $\Sigma$. Let $m$ be a positive integer. Then 
\[\lambda((m-1)\Sigma+\Sigma_{\eps f})\to\lambda(m\Sigma),\ \ \ \text{as $\eps\to 0$.}\]
\end{lemma}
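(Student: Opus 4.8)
The plan is to show that the only thing that can go wrong when passing to the limit $\eps\to 0$ is that the entropy of $(m-1)\Sigma+\Sigma_{\eps f}$ could be achieved at a scale $t_0$ that degenerates to $0$; every other obstruction is handled by the continuity of the $F$-functional. First I would observe that $(m-1)\Sigma+\Sigma_{\eps f}\to m\Sigma$ as Radon measures, and that all these measures are supported in a fixed ball $B_R$ (since $\Sigma$ is closed and $\eps f$ is uniformly small). By Lemma~\ref{L:F-functional is continuous} (or the remark after it, which removes the uniform entropy hypothesis for measures inside a fixed ball) we get $F_{x_0,t_0}((m-1)\Sigma+\Sigma_{\eps f})\to F_{x_0,t_0}(m\Sigma)$ for every fixed $(x_0,t_0)$, which already gives the lower semicontinuity $\liminf_{\eps\to 0}\lambda((m-1)\Sigma+\Sigma_{\eps f})\ge\lambda(m\Sigma)$. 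So the whole content is the reverse inequality $\limsup_{\eps\to 0}\lambda((m-1)\Sigma+\Sigma_{\eps f})\le\lambda(m\Sigma)$.

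For the $\limsup$ bound I would split the supremum over $t_0$ into a compact range and a small range. By Corollary~\ref{Cor:lambda is achieved in interior}, for each fixed $t_0$ the supremum over $x_0$ is attained in the convex hull of the support, hence inside $\overline{B_R}$; so on any compact interval $[a,b]\subset(0,\infty)$ the local entropy $\lambda^{[a,b]}$ is a genuine maximum over a compact set, and Lemma~\ref{L:local entropy is continuous compact interval} gives $\lambda^{[a,b]}((m-1)\Sigma+\Sigma_{\eps f})\to\lambda^{[a,b]}(m\Sigma)$. It therefore suffices to produce, uniformly in small $\eps$, a compact $[a,b]$ outside of which $F_{x_0,t_0}$ cannot exceed $\lambda(m\Sigma)$. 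The upper cutoff $b$ is easy: exactly as in the proof of Proposition~\ref{Prop: Entropy and area growth are equivalent}, for $t_0$ large $F_{x_0,t_0}(\nu)\le C\,\nu(B_R(x_0))/t_0^{n/2}\le C\,\mathrm{(mass)}/t_0^{n/2}$, and the masses of $(m-1)\Sigma+\Sigma_{\eps f}$ converge to that of $m\Sigma$ (they are uniformly bounded), so a single $b$ works for all small $\eps$; since $\lambda(m\Sigma)=F_{0,1}(m\Sigma)>0$, taking $b$ with $C\,\mathrm{(mass)}/b^{n/2}<\tfrac12\lambda(m\Sigma)$ rules out $t_0\ge b$.

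The lower cutoff $a$ is the main obstacle, and this is where the smoothness of $\Sigma$ and the fact that $\Sigma_{\eps f}$ is a graph over $\Sigma$ enter. For $t_0$ small and $x_0$ near the support, $F_{x_0,t_0}$ of a measure is, to leading order, a Gaussian average over a small ball, so it sees only the local multiplicity of the measure at the point of $\Sigma$ closest to $x_0$. Since $\Sigma$ is smooth and embedded and $\eps f$ is $C^2$-small, for $\eps$ small the measure $(m-1)\Sigma+\Sigma_{\eps f}$ is, in every ball of radius $\rho_0$ (for some fixed $\rho_0$ independent of small $\eps$), either a smooth multiplicity-$(m-1)$ graph, a smooth multiplicity-one graph, or the disjoint union of the two, which near any single point looks like at most $m$ sheets of a smooth hypersurface passing through points within distance $O(\eps)$ of each other. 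Hence there is a universal (Euclidean) comparison: the Gaussian density of such a configuration at scale $t_0$ is bounded by $m$ times that of a single plane plus an error $o(1)$ as $t_0\to 0$, uniformly in $\eps$ — concretely, $\lim_{t_0\to 0}F_{x_0,t_0}(\text{$k$ sheets of a }C^2\text{ graph})= k\le m$, and this limit is uniform over the relevant configurations because the $C^2$ norms are uniformly controlled. Since $\lambda(m\Sigma)=m\lambda(\Sigma)> m$ (the entropy of any closed hypersurface is strictly greater than $1$, the value for a plane, by Huisken's monotonicity or by the direct computation that $F_{0,t_0}(\Sigma)\to 1$ as $t_0\to 0$ while $\lambda(\Sigma)=F_{0,1}(\Sigma)$ is attained at a finite scale and strictly exceeds $1$), there exist $a>0$ and $\eps_0>0$ such that $F_{x_0,t_0}((m-1)\Sigma+\Sigma_{\eps f})<\lambda(m\Sigma)$ for all $t_0\le a$, all $x_0$, and all $\eps<\eps_0$. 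Combining the three regimes: $\lambda=\max\{\lambda^{[a,b]},\sup_{t_0\le a}F,\sup_{t_0\ge b}F\}$, the last two are $<\lambda(m\Sigma)$ for small $\eps$, and the middle one converges to $\lambda^{[a,b]}(m\Sigma)=\lambda(m\Sigma)$; hence $\lambda((m-1)\Sigma+\Sigma_{\eps f})\to\lambda(m\Sigma)$. The delicate point to get right is the uniformity (in $\eps$) of the small-scale Gaussian-density bound, i.e. that collapsing $m$ nearly-coincident smooth sheets does not create extra Gaussian density beyond the obvious factor $m$; this is a local, scale-invariant estimate that I would prove by rescaling the ball $B_{\sqrt{t_0}}(x_0)$ to unit size and noting the rescaled sheets converge in $C^1$ to $m$ copies of a hyperplane.
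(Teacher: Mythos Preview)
Your proposal is correct and follows essentially the same approach as the paper: reduce to showing the entropy is achieved on a compact $(x_0,t_0)$-region by handling $x_0$ via the convex-hull corollary, large $t_0$ via the mass bound, and small $t_0$ via the fact that $\lambda(m\Sigma)>m$ together with the small-scale Gaussian density of $m$ nearly-coincident smooth sheets being close to $m$. The paper organizes the small-$t_0$ step as a contradiction along a sequence $\eps_i\to 0$, $t_i\to 0$ (and cites Bernstein--Wang, Zhu, and Ketover--Zhou for the strict inequality $\lambda(\Sigma)>1$, which your justification via $F_{0,t_0}(\Sigma)\to 1$ does not quite establish on its own), whereas you argue the uniformity directly, but the underlying rescaling idea is identical.
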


\begin{proof}
Again, we only need to prove that there exists a compact set $U\times[a,b]\subset\mathbb{R}^{n+1}\times(0,\infty)$ such that when $\eps$ small enough,
\[\lambda((m-1)\Sigma+\Sigma_{\eps f})=\sup_{(x_0,t_0)\in U\times[a,b]}F_{x_0,t_0}((m-1)\Sigma+\Sigma_{\eps f}).\]

\vspace{5pt}
{\bf Step 1: $\lambda$ is achieved at bounded $x_0$.} Because $\Sigma$ is closed, there is $R>0$ such that $\Sigma\subset B_{R/2}$, and then for $\eps$ small enough $\Sigma_{\eps f}\subset B_R$. By the Corollary \ref{Cor:lambda is achieved in interior} of the first variational formula,
\[\lambda((m-1)\Sigma+\Sigma_{\eps f})=\sup_{(x_0,t_0)\subset B_R\times (0,\infty)}F_{x_0,t_0}((m-1)\Sigma+\Sigma_{\eps f}).\]

\vspace{5pt}
{\bf Step 2: $\lambda$ is achieved at $t_0$ with upper bound.} Let $V$ be the volume of $\Sigma$. When $\epsilon$ is small enough, $\Sigma_{\eps f}$ has volume no more than $2V$. Thus the volume of $(m-1)\Sigma+\Sigma_{\eps f}$ is no more than $(m+1)V$. Then, when $t_0^{n/2}>(m+1)V/F_{0,1}(\Sigma)$ we have

\begin{equation}
\begin{split}
F_{x_0,t_0}((m-1)\Sigma+\Sigma_{\eps f})&=\frac{1}{(4\pi t_0)^{n/2}}\int_{(m-1)\Sigma+\Sigma_{\eps f}}e^{-\frac{|x-x_0|^2}{4 t_0}}\\
&\leq \frac{1}{(4\pi t_0)^{n/2}}\vol((m-1)\Sigma+\Sigma_{\eps f})\\
&\leq \frac{1}{(4\pi t_0)^{n/2}}(m+1)V\leq F_{0,1}(\Sigma).
\end{split}
\end{equation}

Note $F_{0,1}((m-1)\Sigma+\Sigma_{\eps f})\to F_{0,1}(m\Sigma)$, so when $\epsilon$ small enough, let $b=(m+1)V/F_{0,1}(\Sigma)$ we can see that $\lambda ((m-1)\Sigma+\Sigma_{\eps f})$ cannot be achieved at $t_0>b$. Hence
\[\lambda((m-1)\Sigma+\Sigma_{\eps f})=\sup_{(x_0,t_0)\subset B_R\times (0,b]}F_{x_0,t_0}((m-1)\Sigma+\Sigma_{\eps f}).\]

\vspace{5pt}
{\bf Step 3: $\lambda$ is achieved at $t_0$ with lower bound.} Suppose $\lambda(m\Sigma)$ is achieved at $(y,s)$. It is known that the entropy of closed hypersurfaces are bounded below by the entropy of a sphere, see Bernstein-Wang \cite{Bernstein-Wang-2016}, Jonathan Zhu \cite{zhu2016entropy}, Ketover-Zhou \cite{ketover-zhou}. Hence
\[\lambda(m\Sigma)=m\lambda(\Sigma)\geq m\lambda(S^n)>m.\]
Suppose $\lambda(m\Sigma)=m+\delta$ for $\delta>0$. Since $F_{y,s}((m-1)\Sigma+\Sigma_{\eps f})\to F_{y,s}(m\Sigma)$, when $\epsilon$ is small enough $F_{y,s}((m-1)\Sigma+\Sigma_{\eps f})$ is greater than $m+\delta/2$, hence $\lambda((m-1)\Sigma+\Sigma_{\eps f})>m+\delta/2$ when $\epsilon$ is small enough.

Now we show that there exists $a>0$ such that when $t_0<a$, $F_{x_0,t_0}((m-1)\Sigma+\Sigma_{\eps f})<\lambda((m-1)\Sigma+\Sigma_{\eps f})$ for all $x_0\in\mathbb{R}^{n+1}$. Let us argue by contradiction. Suppose such $a$ does not exists, then there exists a sequence $\{\eps_i\}_{i=1}^\infty$, $\eps_i\to 0$ as $i\to \infty$, such that $\lambda((m-1)\Sigma+\Sigma_{\eps_i f})=F_{x_i,t_i}((m-1)\Sigma+\Sigma_{\eps_i f})$ where $t_i\to 0$ as $i \to \infty$. 

Denote $(m-1)\Sigma+\Sigma_{\eps f}$ by $\mu_i$. Let us estimate $F_{x_i,t_i}(\mu_i)$. Let $R_0>0$ to be determined. We can write $F_{x_i,t_i}(\mu_i)$ into two terms:
\begin{equation}
\begin{split}
F_{x_i,t_i}(\mu_i)&=\frac{1}{(4\pi t_i)^{n/2}}\int e^{-\frac{|x-x_i|^2}{4t_i}}d\mu_i\\
&=\frac{1}{(4\pi t_i)^{n/2}}\int_{|x-x_i|\geq R_0} e^{-\frac{|x-x_i|^2}{4t_i}}d\mu_i+\frac{1}{(4\pi t_i)^{n/2}}\int_{|x-x_i|<R_0} e^{-\frac{|x-x_i|^2}{4 t_i}}d\mu_i.
\end{split}
\end{equation}

The first term is bounded by
\[\frac{1}{(4\pi t_i)^{n/2}}e^{-\frac{R_0^2}{4 t_i}}\vol((m-1)\Sigma+\Sigma_{\eps f})\leq \frac{1}{(4\pi t_i)^{n/2}}e^{-\frac{R_0^2}{4t_i}}(m+1)V\leq C R_0^{-n},\]
where $C$ is a uniform constant, and in the last inequality we use the fact that $e^{x}\geq C x^{n/2}$. We choose $R_0$ such that $CR_0^{-n}\leq \delta/6$

For the second term, note that when $\epsilon$ small enough, $(m-1)\Sigma+\Sigma_{\eps f}$ has uniform curvature bound $|A|^2\leq C$. We translate $\mu_i$ by $x_0$ and then rescale $\mu_i$ by $t_i$ to get a new Radon measure of closed embedded hypersurfaces $\tilde\mu_i$, and the second term equals
\[\frac{1}{(4\pi t_i)^{n/2}}\int_{|x-x_i|<R_0} e^{-\frac{|x-x_i|^2}{4 t_i}}d\mu_i=\frac{1}{(4\pi )^{n/2}}\int_{|x|<t_i^{-1}R_0} e^{-\frac{|x|^2}{4}}d\tilde\mu_i.\]
Note that $\tilde\mu_i$ consists of hypersurfaces with curvature bound $|A|^2\leq C t_i^2$. So if we fix $R_i$ and let $i\to \infty$, this term will converge to the integral of Gaussian measure of hyperplanes through the origin, i.e. the multiplicity of $\mu_i$ at point $x_i$, $m$. So when $i$ large enough, this term is bounded by $m+\delta/6$. 

Combining the two terms we have
\[F_{x_i,t_i}((m-1)\Sigma+\Sigma_{\eps_i f})\leq m+\delta/3<m+\delta/2\] 
when $i$ large enough, which is a contradiction. Hence there exists $a>0$ such that when $t_0<a$, $F_{x_0,t_0}((m-1)\Sigma+\Sigma_{\eps f})<\lambda((m-1)\Sigma+\Sigma_{\eps f})$ for all $x_0\in\mathbb{R}^{n+1}$. 

Then we finish the proof.
\end{proof}

\subsection{Dynamics of Local Entropy}
Now we study the behavior of $F$-functional and local entropy under the mean curvature flow. Let $\{M_t\}_{t\in[0,T)}$ be a family of hypersurfaces flowing by mean curvature. Recall Huisken's monotonicity formula Theorem \ref{Thm:Huisken's monotonicity formula for F-functional}, we have
\[F_{x_0,t_0+{t_1-t_2}}(M_{t_2})\leq F_{x_0,t_0}(M_{t_1})\] 
for all $t_1\leq t_2$, $t_0\geq t_2-t_1$. Taking the supremum among $x_0\in\mathbb{R}^{n+1}$ and $t_0\in[a,b]$ for an interval $[a,b]\subset(0,\infty)$ gives
\begin{equation}\label{E:local entropy change under monotonicity}
\lambda^{[a+{t_1-t_2},b+{t_1-t_2}]}(M_{t_2})\leq \lambda^{[a,b]}(M_{t_1}).
\end{equation}

From the above computations, we know that the local entropy is no longer dilation invariant for a given hypersurface, hence no longer monotone decreasing under the rescaled mean curvature flow. However, it still can be the obstruction of singularities just like entropy. We now state the main theorem of this section.

\begin{theorem}\label{Thm:no certain type singularity in finite interval}
Given $0<a<1<b$. Let $M$ be a closed hypersurface, with 
\[\lambda^{[a\alpha^{-2},b\alpha^{-2}]}(M)<\lambda(m\Sigma),\]
where $m\Sigma$ is the Radon measures of a closed self-shrinker with multiplicity $m$. Let $\{M_t\}$ be the mean curvature flow starting from $M$ at time $T-\alpha^{-2}$. Then $m\Sigma$ can not be the blow up limit of $M_t$ in the time interval $[T+(a-1)\alpha^{-2},T+(b-1)\alpha^{-2}]$.
\end{theorem}

\begin{proof}
We argue by contradiction. Suppose $T_0\in [T+(a-1)\alpha^{-2},T+(b-1)\alpha^{-2}]$ is a singular time, and a blow up sequence of $M_t$ converge to $m\Sigma$, i.e. there exists $\alpha_i\to \infty$ such that $M_{t}^{\alpha_i,(y,T_0)}\to \sqrt{-t}m\Sigma$ as Radon measure. Then the sequence $M_i=M^{\alpha_i,(y,T_0)}_{-1}$ converge to $m\Sigma$ as Radon measure. Thus by the continuity of local entropy, we have
\[\lambda^{1}(M_i)\to \lambda^1(m\Sigma)=\lambda(m\Sigma).\]
By (\ref{E:local entropy change under rescaling}) and (\ref{E:local entropy change under monotonicity}), assuming that $\alpha_i$'s are large enough such that $(T_0-T+\alpha^{-2}-\alpha_i^{-2})>0$ we have
\begin{equation}
\begin{split}
\lambda^{1}(M_i)&=\lambda^{\alpha_i^{-2}}(M_{T_0-\alpha_i^{-2}})=\lambda^{\alpha_i^{-2}}(M_{T-\alpha^{-2}+(T_0-T+\alpha^{-2}-\alpha_i^{-2})})\\
&\leq \lambda^{\alpha_i^{-2}+(T_0-T+\alpha^{-2}-\alpha_i^{-2})}(M_{T-\alpha^{-2}})=\lambda^{T_0-T+\alpha^{-2}}(M_{T-\alpha^{-2}})\\
&\leq \lambda^{[a\alpha^{-2},b\alpha^{-2}]}(M).
\end{split}
\end{equation}
The last inequality comes from the assumption that $T_0\in [T+(a-1)\alpha^{-2},T+(b-1)\alpha^{-2}]$. However, $\lambda^{[a\alpha^{-2},b\alpha^{-2}]}(M)<\lambda(m\Sigma)$, hence $\lambda^{1}(M_i)$ can not converge to $\lambda(m\Sigma)=\lambda^1(m\Sigma)$, which is a contradiction.
\end{proof}

Note the only place in the proof we use the fact that $[a,b]\subset(0,\infty)$ is the continuity of local entropy. Hence by the continuity of local entropy even $b=\infty$ (Lemma \ref{L:local entropy is continuous open interval}), the proof works well for $b=\infty$. Thus we can generalize it into the following theorem:

\begin{theorem}\label{Thm:no certain type singularity in infinite interval}
Given $0<a<1$. Let $M$ be a closed hypersurface, with 
\[\lambda^{[a\alpha^{-2},\infty)}(M)<\lambda(m\Sigma),\]
where $m\Sigma$ is the Radon measures of a closed self-shrinker with multiplicity $m$. Let $\{M_t\}$ be the mean curvature flow starting from $M$ at time $T-\alpha^2$. Then $m\Sigma$ can not be the blow up limit of $M_t$ after time $T+(a-1)\alpha^{-2}$.

\end{theorem}

\begin{proof}
The proof is the same as the proof of Theorem \ref{Thm:no certain type singularity in finite interval}.
\end{proof}

\section{Entropy Instablity}\label{S:Entropy Instablity}
In \cite{colding2012generic}, Colding-Minicozzi studied the entropy stability of self-shrinkers. cf. \cite{liu2016index}. In this section, we review some of their ideas and generalize some of them to the Radon measures of embedded self-shrinkers.

Given a closed embedded self-shrinker $\Sigma\subset\mathbb{R}^{n+1}$, we say it is \emph{entropy unstable} if for given $\epsilon_0>0$, there exists a $C^{2,\alpha}$ function $f$ defined on $\Sigma$ such that $\|f\|_{C^{2,\alpha}}<\epsilon_0$ and
\[\lambda(\Sigma_{f})<\lambda(\Sigma).\]
 Otherwise we say it is entropy stable.

Entropy is defined for all the scales of a self-shrinker, hence it seems hard to characterize whether a self-shrinker is entropy stable or not. However, in \cite{colding2012generic}, Colding-Minicozzi proved that the entropy stability of self-shrinkers is equivalent to the stability of self-shrinkers as minimal surfaces in the Gaussian space. As a result, they can prove that the hyperplanes, spheres and generalized cylinders are the only entropy stable self-shrinkers. Moreover, for an unstable self-shrinker $\Sigma$, Colding-Minicozzi can construct a nearby hypersurface which has entropy strictly less than the entropy of $\Sigma$.

We want to generalize their analysis to self-shrinkers with multiplicity. 

\begin{definition}
Let $\Sigma$ be a closed embedded self-shrinker, and let $\mu=m\Sigma$. We say $\mu$ is \emph{entropy unstable} if for given $\epsilon_0>0$ there exist $C^{2,\alpha}$ functions $f_1\leq \cdots\leq f_m$ on $\Sigma$ such that $\|f_i\|_{C^{2,\alpha}}<\epsilon_0$ and 
\[\lambda(\sum_{i=1}^m\Sigma_{f_i})<\lambda(m\Sigma).\]
\end{definition}
Here $f_1\leq \cdots\leq f_m$ is a technical requirement for the study of embedded hypersurfaces. Note that this definition coincides with the definition by Colding-Minicozzi in \cite{colding2012generic} if the multiplicity $m=1$.

One essential fact is that all the closed self-shrinkers with multiplicity higher than $1$ are entropy unstable. This is our Theorem \ref{Thm: higher multiplicity self-shrinkers are entropy unstable}. We can construct an explicit perturbation, and Theorem \ref{Thm: higher multiplicity self-shrinkers are entropy unstable} is a direct corollary of this construction. This is the following theorem.

\begin{theorem}\label{Thm: Construction of unstable perturbation of self-shrinker with multiplicity}
Let $\Sigma$ be a closed self-shrinker, $m>1$ be a positive integer. There exists a smooth positive function $f$ defined on $\Sigma$ and $\epsilon_0>0$ such that for $\epsilon\in(0,\epsilon_0)$, 
\[\lambda((m-1)\Sigma+\Sigma_{\epsilon f})<\lambda(m\Sigma).\]
\end{theorem}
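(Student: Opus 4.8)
The plan is to split the entropy of $m\Sigma$ into a ``bulk'' term coming from $(m-1)\Sigma$ and a single ``top layer'' $\Sigma_{\epsilon f}$, and then exploit the fact that $\Sigma$ — being a closed self-shrinker — is the gradient-flow object for the $F_{0,1}$-functional in the Gaussian space, which is unstable. Concretely, by Proposition \ref{Prop:entropy sum and sum of entropy} and Lemma \ref{L:entropy is continuous for perturbation}, for $\epsilon$ small the entropy $\lambda((m-1)\Sigma+\Sigma_{\epsilon f})$ is achieved at some $(x_\epsilon,t_\epsilon)$ lying in a \emph{compact} region of $\R^{n+1}\times(0,\infty)$: bounded $x_\epsilon$ by Corollary \ref{Cor:lambda is achieved in interior}, bounded $t_\epsilon$ from above by the volume bound in the proof of Lemma \ref{L:entropy is continuous for perturbation}, and bounded away from $0$ by the argument there (using that $\lambda(m\Sigma)=m\lambda(\Sigma)>m$ by the sphere entropy lower bound, while tiny-scale $F$-values approach $m$). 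By compactness, along any $\epsilon_i\to 0$ we may assume $(x_{\epsilon_i},t_{\epsilon_i})\to(x_*,t_*)$, and by continuity $F_{x_*,t_*}(m\Sigma)=\lambda(m\Sigma)$, so $(x_*,t_*)$ is a maximizer of the entropy of $m\Sigma$; since $\Sigma$ is a self-shrinker this forces $(x_*,t_*)=(0,1)$ (the maximizer of $F$ for a self-shrinker is $(0,1)$, and it is the unique interior critical point in the relevant situation — here one uses that $\Sigma$ is a closed self-shrinker so $F_{x_0,t_0}$ has no other critical point with value $\lambda(\Sigma)$).

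Now I would do a first-variation computation. Write $\mu_\epsilon=(m-1)\Sigma+\Sigma_{\epsilon f}$ and estimate $\lambda(\mu_\epsilon)$ from above by $F_{0,1}(\mu_\epsilon)+o(\epsilon)$; more precisely, since the maximizer $(x_\epsilon,t_\epsilon)\to(0,1)$ and $F$ is smooth with a nondegenerate-in-the-right-directions maximum for $\Sigma$ at $(0,1)$, we get $\lambda(\mu_\epsilon)\le F_{0,1}(\mu_\epsilon)+C|(x_\epsilon,t_\epsilon)-(0,1)|^2$, and the displacement $(x_\epsilon,t_\epsilon)-(0,1)$ is itself $O(\epsilon)$ (it is driven by the $O(\epsilon)$ perturbation of the measure), so the correction is $O(\epsilon^2)$ — lower order than the linear term we are about to extract. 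Then by the Colding–Minicozzi first variation formula (the Lemma quoted from \cite[Lemma 3.1]{colding2012generic}, applied at $(x_0,t_0)=(0,1)$ with $h=0$, $y=0$),
\[
F_{0,1}(\Sigma_{\epsilon f})=F_{0,1}(\Sigma)+\epsilon\,\frac{1}{(4\pi)^{n/2}}\int_\Sigma f\left(H-\frac{\langle x,\mathbf n\rangle}{2}\right)e^{-\frac{|x|^2}{4}}\,d\mu+O(\epsilon^2),
\]
and because $\Sigma$ is a self-shrinker the bracket vanishes identically, so the linear term is zero and one must go to \emph{second order}. The second variation of $F_{0,1}$ at a self-shrinker in the normal direction $f$ is governed by the stability operator $L=\Delta_\Sigma-\tfrac12\langle x,\nabla\cdot\rangle+|A|^2+\tfrac12$ (the drift-Laplace plus $|A|^2+\tfrac12$), and Colding–Minicozzi show that on any closed self-shrinker this operator has a positive eigenvalue — in fact the bottom eigenfunction is positive. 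Choosing $f$ to be that positive first eigenfunction of $L$ (normalized), the second-order term $\tfrac{\epsilon^2}{2}\cdot\big(-\int_\Sigma fLf\,e^{-|x|^2/4}\big)\big/(4\pi)^{n/2}<0$ is strictly negative, and it dominates the $O(\epsilon^2)$ error terms once one is careful that those errors have a strictly smaller constant (this is the standard Colding–Minicozzi trick: the instability eigenvalue gives a definite negative constant, and all remaining $\epsilon^2$-errors can be absorbed by further shrinking $\epsilon_0$ and, if needed, rescaling $f$). Adding back the inert $(m-1)\Sigma$ piece, which contributes exactly $(m-1)\lambda(\Sigma)$ since its maximizer is still $(0,1)$, gives $\lambda(\mu_\epsilon)\le (m-1)\lambda(\Sigma)+F_{0,1}(\Sigma_{\epsilon f})+O(\epsilon^2)<m\lambda(\Sigma)=\lambda(m\Sigma)$, and positivity of the eigenfunction guarantees $f>0$ as required.

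The main obstacle is controlling the location of the maximizer $(x_\epsilon,t_\epsilon)$ of $\lambda(\mu_\epsilon)$ with enough precision. Lemma \ref{L:entropy is continuous for perturbation} gives $(x_\epsilon,t_\epsilon)$ in a fixed compact set and subconvergence to $(0,1)$, but to beat the $O(\epsilon^2)$ error from the second-order expansion I need the quantitative estimate $|(x_\epsilon,t_\epsilon)-(0,1)|=O(\epsilon)$, hence the quadratic correction $O(\epsilon^2)$ with a controllable constant. This requires that $(0,1)$ be a \emph{strict} interior maximum of $(x_0,t_0)\mapsto F_{x_0,t_0}(\Sigma)$ in the directions transverse to the translation/dilation symmetries — equivalently, nondegeneracy of the relevant Hessian block of $F$ at $(0,1)$ — which for a closed self-shrinker follows from the analysis in \cite{colding2012generic} (the $F$-functional has a unique maximizer for closed self-shrinkers and the Hessian in $x_0,t_0$ is negative definite there). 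Once that is in hand, the rest is the now-standard second-variation computation; alternatively one can bypass the precise rate by the softer argument that $F_{x_\epsilon,t_\epsilon}(\mu_\epsilon)\le F_{x_\epsilon,t_\epsilon}((m-1)\Sigma)+F_{x_\epsilon,t_\epsilon}(\Sigma_{\epsilon f})\le (m-1)\lambda(\Sigma)+F_{x_\epsilon,t_\epsilon}(\Sigma_{\epsilon f})$ and then showing $\sup_{(x_0,t_0)\text{ near }(0,1)}F_{x_0,t_0}(\Sigma_{\epsilon f})<\lambda(\Sigma)$ uniformly, which again reduces to the eigenvalue computation but now on a small neighborhood of parameters rather than a single point.
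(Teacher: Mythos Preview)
Your approach is far more elaborate than the paper's. The paper simply invokes the subadditivity of Proposition~\ref{Prop:entropy sum and sum of entropy},
\[
\lambda\bigl((m-1)\Sigma+\Sigma_{\epsilon f}\bigr)\le (m-1)\lambda(\Sigma)+\lambda(\Sigma_{\epsilon f}),
\]
and then splits into two cases. If $\Sigma$ is not a round sphere, Colding--Minicozzi \cite[Section~6]{colding2012generic} already furnish a positive $f$ with $\lambda(\Sigma_{\epsilon f})<\lambda(\Sigma)$, giving strict inequality immediately. If $\Sigma$ is the sphere one takes $f\equiv 1$, so $\Sigma_{\epsilon f}$ is a dilated sphere with $\lambda(\Sigma_{\epsilon f})=\lambda(\Sigma)$ but with maximizer at $(0,t_\epsilon)$, $t_\epsilon\ne 1$; since the maximizer for $(m-1)\Sigma$ is $(0,1)$, the equality case of Proposition~\ref{Prop:entropy sum and sum of entropy} fails and the subadditivity is strict. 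Your ``softer argument'' in the last paragraph is exactly the non-sphere half of this, but it does not cover the sphere: the supremum of $F_{x_0,t_0}(\Sigma_{\epsilon})$ over $(x_0,t_0)$ near $(0,1)$ equals $\lambda(\Sigma)$, not strictly less.

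Your main argument (tracking the maximizer $(x_\epsilon,t_\epsilon)$ of the full measure $\mu_\epsilon$ and expanding to second order) can in principle be pushed through, but the assertion that ``the instability eigenvalue gives a definite negative constant, and all remaining $\epsilon^2$-errors can be absorbed by further shrinking $\epsilon_0$ and, if needed, rescaling $f$'' is exactly where a genuine gap sits. On the sphere the positive bottom eigenfunction of $L$ is the constant function, i.e.\ the dilation direction, and the second-variation drop at $(0,1)$ is \emph{exactly cancelled} by the maximizer-drift error when $m=1$: this is precisely the statement that the sphere is entropy stable. The balance only tips because, for $m>1$, the Hessian of $(x_0,t_0)\mapsto F_{x_0,t_0}(\mu_\epsilon)$ at $(0,1)$ is $\approx m\,\nabla^2 F(\Sigma)$ while the gradient there is $O(\epsilon)$ \emph{independent of $m$}, so the drift error acquires a factor $1/m$. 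You never isolate this; and ``rescaling $f$'' cannot help, since both the gain and the error are homogeneous of degree two in $\epsilon$. So the hypothesis $m>1$ has to enter through a quantitative comparison of constants that you do not carry out, whereas the paper's strict-subadditivity device sidesteps this bookkeeping entirely.
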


Note this construction is related to the perturbation $0\leq \cdots \leq 0 \leq f$.

\begin{proof}
We consider two cases: whether $\Sigma$ is a sphere or not.

\vspace{5pt}
{\bf $\Sigma$ is not a sphere.} If $\Sigma$ is not a sphere, then by \cite[Section 6]{colding2012generic}, there exists a positive smooth function $f$ defined on $\Sigma$ which is an entropy unstable variation, i.e. $\lambda(\Sigma_{\epsilon f})<\lambda(\Sigma)$ for $\epsilon$ very small. Thus we have
\[\lambda((m-1)\Sigma+\Sigma_{\epsilon f})\leq \lambda((m-1)\Sigma)+\lambda(\Sigma_{\epsilon f})<\lambda((m-1)\Sigma)+\lambda(\Sigma)=\lambda(m\Sigma).\]

\vspace{5pt}
{\bf $\Sigma$ is a sphere.} If $\Sigma$ is a sphere, we choose $f\equiv 1$. Then $\Sigma_{\epsilon f}$ is also a sphere, but has different radius. Then the entropy of $\Sigma_{\epsilon f}$ is not achieved by $F_{0,1}(\Sigma_{\epsilon f})$. However the entropy of $\Sigma$ is achieved by $F_{0,1}(\Sigma)$. Thus 
\[\lambda((m-1)\Sigma+\Sigma_{\epsilon f})<\lambda((m-1)\Sigma)+\lambda(\Sigma_{\epsilon f})=\lambda((m-1)\Sigma)+\lambda(\Sigma)=\lambda(m\Sigma).\]
Here we use Proposition \ref{Prop:entropy sum and sum of entropy}.
\end{proof}

\section{Ilmanen's Analysis of Mean Curvature Flow Near Singularity}\label{S:Ilmanen's Analysis of MCF Near Singularity}
In the following three sections, we will be concentrating on surfaces in $\mb R^3$.

\vspace{5pt}
In \cite{ilmanen1995singularities}, Ilmanen proved that a blow up limit of the mean curvature flow of closed embedded surfaces at the first singular time is a smooth embedded self-shrinker. In this section we review a key step in Ilmanen's proof (\cite[Proof of Theorem 1.1]{ilmanen1995singularities}). 

Suppose $\{M_t\}_{t\in[0,T)}$ is a family of smooth embedded surfaces flowing by mean curvature. The key step shows that near the singular time $T$, we have a sequence of timing $t_i\to T$ such that the blowing up sequence $M_{t_i}^{\alpha^j}$ have uniform curvature integral bound besides a discrete set of points. As a result, these slices are multi-sheeted layers over the limit self-shrinker. For detailed proof, see Ilmanen's preprint \cite{ilmanen1995singularities}. Also see \cite{sun2018singularities} for the proof of mean curvature flow with additional forces. 

We sketch the proof here.

\begin{proof}[Sketch of Ilmanen' proof]
Let $\{M_t\}_{t\in[0,T)}$ be a family of surfaces in $\mb R^3$ flowing by mean curvature. Suppose $\{\alpha_j\}$ is a sequence of numbers such that $\alpha^j\to\infty$ as $j\to \infty$. By the local Gauss-Bonnet Estimate \cite[Theorem 3]{ilmanen1995singularities} and the estimate of mean curvature integral bound \cite[Lemma 6]{ilmanen1995singularities}, Ilmanen proved that (See \cite[Page 21-22]{ilmanen1995singularities})
there exists a sequence $\tau_j\to 0$, $R_j\to \infty$ and $t_j\in[-1-\tau_j,-1]$ so that
\[\int_{M_{t_j}^{\alpha_j}\cap B_{R_j}}|A|^2\leq C{R_j}^2+C+\delta_R(j)/\tau_j,\]
and
\[\int_{M_{t_j}^{\alpha_j}}\frac{1}{4\pi}e^{-\frac{|x|^2}{4}}\left|H-\frac{\langle x,\mathbf n\rangle}{-2t_j}\right|^2\leq \delta(j)/\tau_j,\]
where $\delta_R(j)/\tau_j\to 0$ and $\delta(j)/\tau_j\to 0$. Here note that Ilmanen used a different sign in the definition of mean curvature, hence in \cite{ilmanen1995singularities} the sign of mean curvature is different from this paper.

Define $\sigma_j=|A|^2\mc H^2\lfloor M_{t_j}^{\alpha^j}$. Then $\sigma_j$ is a Radon measure, hence we can choose a subsequence converging to a Radon measure $\sigma$. By \cite[Lemma 8]{ilmanen1995singularities}, $M_{t_j}^{\alpha^j}$ converges to a self-shrinker $\nu$ as Radon measure. Denote the support of $\nu$ by $\Sigma$.

Now we prove that when $j$ is large enough, $M_{t_j}^{\alpha^j}$ is a union of layers over the limited self-shrinker besides a set of discrete points. Let $\epsilon_0>0$ which will be determined later. Since for each ball $B_R$, $\sigma_j(B_R)$ has uniform bound, so $\sigma(B_R)$ is finite as well. Thus there are only finitely many points (hence discrete) $p_1,\cdots,p_k$ in $B_R$ such that $\sigma(p_i)>\epsilon_0$. 

Suppose $\epsilon\in (0,\epsilon_0]$, $p\in \Sigma$ and $r,R>0$ such that 
\[B_r(p)\subset B_R,\ \ \ r\in(0,\epsilon),\ \ \ \sigma(B_r(p))<\epsilon^2.\]
Let us write $M_i$ to be $M_{t_{j_i}}^{\alpha^{j_i}}$ where $\{j_i\}$ is a subsequence of $\{j\}$, and $\mu_i=\mc H^2\lfloor M_i$. When $i$ large enough, $\sigma_{j_i}(B_r(p))$ is also smaller than $\epsilon^2$, hence we can apply Simon's graph decomposition theorem (\cite[Theorem 10]{ilmanen1995singularities}, \cite[Lemma 2.1]{Simon-Willmore}) and its corollary (\cite[Corollary 11]{ilmanen1995singularities}) to prove that there exists $s_i\in[r/4,r/2]$ and decomposition of $M_l\cap B_{s_i(p)}$ into distinct components $D_{i,l}$'s. Each $D_{i,l}$ is an embedded disk with certain area bound. Then Allard's compactness theorem (\cite[6.4]{allard1972first}, \cite[Theorem 42.7, p.247]{simon1983lecture}) implies that there is a subsequence of $s_i\to s\in[r/4,r/2]$. with $\mc H^2\lfloor D_{i,l}\to \nu_l$ for some varifold limit $\nu_l$, $l=1,\cdots,m$. 

By the monotonicity formula, $\nu_l$ weakly solves the self-shrinker's equation, and as a result $|H|\leq \epsilon/r$. Then Allard's regularity theorem (\cite[Theorem 9]{ilmanen1995singularities}, \cite[Section 8]{allard1972first}) together with the area bound obtained from Simon's graph decomposition theorem imply that the support of $\nu_l$ is a graph of a $C^{1,\alpha}$ function over a domain in a $2$-plane. By Schauder estimate $\nu_l$ is actually a graph of a $C^\infty$ function over a smaller domain in a $2$-plane. Thus $\Sigma$ is an immersed manifold in a small ball $B_{s/2}(p)$.

As a result, we proved that $\Sigma$ is a smooth manifold besides a discrete set of points $\{p_1,p_2,\cdots\}$. By maximum principle, $\Sigma$ is actually smoothly embedded away from $\{p_1,p_2,\cdots\}$. Also note the self-shrinker's equation is the minimal surface equation in $\R^3$ with Gaussian metric, the standard removable of singularity theorem ( \cite{gulliver}, \cite[Proposition 1]{choi1985space}) implies that $\Sigma$ is actually a smooth embedded everywhere. 

Moreover, Ilmanen proved that away from the small neighbourboods of those curvature concentrating points, $M_i$ is decomposed to several layers, See \cite[p.24-27]{ilmanen1995singularities}. More precisely, there exist $r_i\to 0$ such that $M_i\backslash (\cup_{i=1}^k B_{r_i}(p_k))$ is the union of $m$ connected components. Each component is called a layer, which is homotopic to $\Sigma\backslash\{p_1,\cdots,p_k\}$. As $i\to \infty$, each layer converges to $\Sigma$ as Radon measure.
\end{proof}

\begin{remark}
Ilmanen's proof relies on some arguments which only hold for surfaces in $\R^3$. For example the generalized Gauss-Bonnet theorem and Simon's graphical decomposition theorem. The smoothness also relies on the fact of the maximum principle and embeddedness of the flow. Hence the proof only works when the family of surfaces are embedded surfaces of codimension one. Thus the proof of smoothness and embeddedness of the tangent flow only works for surfaces in $\R^3$ flowing by mean curvature.

In \cite[p.28]{ilmanen1995singularities}, Ilmanen also gave an example to show that in higher codimensions the tangent flow may not be smooth. So far it is not known whether the tangent flow of a family of closed smooth embedded hypersurfaces in $R^{n+1}$ flowing by mean curvature is smooth for $n>2$.
\end{remark}

\begin{remark}
The uniqueness is another issue that we do not completely understand. 

So far there are some partial results on the uniqueness of the tangent flow. For example, Colding-Ilmanen-Minicozzi \cite{colding-ilmanen-minicozzi} proved that if one of the tangent flow at a singular point is a multiplicity one cylinder, then all the tangent flows at that singular point is a multiplicity one cylinder. Later Colding-Minicozzi \cite{colding-minicozzi_lojasiewicz} proved that those cylinders are in a fixed direction. If the tangent flow is a closed shrinker with multiplicity one, Schulze \cite{schulze_uniqueness} proved the uniqueness of the tangent flow. If the mean curvature flow is a family of smooth embedded surfaces in $\R^3$, Bernstein-Wang \cite{bernstein-wang_uniqueness} proved that even with higher multiplicity, if the tangent flow is a sphere, a cylinder or a plane, then it must be unique up to $SO(3)$ action on $\R^3$.

It would be interesting to know whether the tangent flow is unique for more general cases.
\end{remark}

To conclude this section, we use Ilmanen's analysis to prove that those blow up slices $M_i=M_{t_{j_i}}^{\alpha^{j_i}}$ are actually supported on a small tubular neighbourhood of $\Sigma$ if the $\Sigma$ is a closed self-shrinker.

\begin{corollary}\label{Cor:M_i is supported on bounded ball}
Suppose the blow up limit at $T$ is a closed self-shrinker $\Sigma$. For given $\delta>0$ such that the $\delta$-tubular neighbourbood $N_\delta\Sigma$ is well-defined, there exists $\tilde i>0$ such that $M_i$ is supported on the $N_\delta\Sigma$ for $i>\tilde i$.
\end{corollary}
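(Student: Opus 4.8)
The plan is to read the statement off from the multi-layer description of the slices $M_i$ recalled in the sketch above, together with the elementary remark that a ball of radius less than $\delta$ centred at a point of $\Sigma$ is contained in $N_\delta\Sigma$. So the whole game is to cover $M_i$ by such small balls near the curvature-concentration points plus normal graphs of small height over $\Sigma$ everywhere else.

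Concretely, I would first fix the finitely many concentration points $p_1,\dots,p_K\in\Sigma$ produced in the sketch and choose $\rho>0$ small enough that the balls $B_{2\rho}(p_k)$ are pairwise disjoint, that $\bigcup_k B_\rho(p_k)\subset N_\delta\Sigma$ (possible since each $p_k\in\Sigma$), and that $\sigma\big(B_{2\rho}(p_k)\setminus\{p_k\}\big)$ is below a prescribed threshold $\eps^2$ for each $k$ (possible because $\sigma$ is a Radon measure, so this mass tends to $0$ as $\rho\to 0$). For $i$ large we then have $r_i<\rho$, and the sketch writes $M_i\setminus\bigcup_k B_{r_i}(p_k)$ as a disjoint union of $m$ layers; since $M_i\cap\bigcup_k B_{r_i}(p_k)\subset\bigcup_k B_\rho(p_k)\subset N_\delta\Sigma$, it suffices to put each layer $Y$ inside $N_\delta\Sigma$. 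Over the compact region lying above $\Sigma\setminus\bigcup_k B_\rho(p_k)$ the smooth convergence extracted in the sketch (Simon decomposition, Allard regularity, Schauder) makes $Y$ a normal graph over $\Sigma$ with $C^1$-norm tending to $0$, hence contained in $N_{\delta/2}\Sigma$ once $i$ is large. Over each annular region above $\Sigma\cap\big(B_\rho(p_k)\setminus B_{r_i}(p_k)\big)$ I would apply Simon's graphical decomposition at the scale comparable to the distance to $p_k$ — legitimate because the curvature integrals involved are bounded by $\sigma\big(B_{2\rho}(p_k)\setminus\{p_k\}\big)+o(1)<\eps^2$ — which exhibits $Y$ there as a normal graph over $\Sigma$ with gradient bounded by a universal constant $C$ (together with localized error pieces that remain near $\Sigma$); since the graph has height $o(1)$ on the outer circle, height $\le 2r_i$ on the inner circle, and $\Sigma\cap B_\rho(p_k)$ has diameter $\le 2\rho$, its height is $\lesssim C\rho<\delta$ provided $\rho$ was chosen small relative to $\delta$. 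Combining the two regions gives $Y\subset N_\delta\Sigma$, hence $M_i\subset N_\delta\Sigma$ for all large $i$.

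The main obstacle is exactly this last step: on the annuli surrounding the $p_k$ Ilmanen's analysis supplies no pointwise curvature bound, so one genuinely needs the small-$\sigma$-mass-on-punctured-discs trick to obtain a gradient bound for $Y$ at the correct scales and thereby prevent a thin finger of $M_i$ from escaping the tube through a bad ball. A cleaner fallback, which I would use if one prefers to avoid the graphical bookkeeping, is a contradiction argument: if some $q_i\in M_i$ had $\dist(q_i,\Sigma)\ge\delta$, then automatically $\dist(q_i,p_k)\ge\dist(q_i,\Sigma)\ge\delta>\rho$ for every $k$, so $q_i$ lies in the region where $M_i$ converges smoothly, in particular in Hausdorff distance, to $\Sigma$; passing to a subsequence $q_i\to q_\infty$ one gets $q_\infty\in\Sigma$ with $\dist(q_\infty,\Sigma)\ge\delta$, a contradiction. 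Making "Hausdorff convergence away from the $p_k$" precise is where one invokes a uniform lower area-density bound for $M_i$ at a fixed scale, which follows from Huisken's monotonicity formula applied to the smooth rescaled flow through $M_i$ (available since $t_{j_i}\le -1$, so that flow persists for a definite time), and this is the only point in either route that needs care.
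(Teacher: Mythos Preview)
Your fallback contradiction argument is close in spirit to the paper's proof, but both of your routes share a genuine gap: neither rules out the possibility that the bad points $q_i\in M_i$ with $\dist(q_i,\Sigma)\ge\delta$ escape to infinity. In your fallback you pass to a subsequence $q_i\to q_\infty$ without justifying that $\{q_i\}$ is bounded; a priori $M_i=\alpha_i(M_{T+\alpha_i^{-2}t_i}-y)$ sits in a ball of radius $\sim\alpha_i\to\infty$, and the area-lower-bound you extract from monotonicity, combined only with Radon-measure convergence $M_i\to m\Sigma$, says nothing about mass drifting off to infinity. Your primary graph argument has the same blind spot in a different guise: when you write ``over the compact region lying above $\Sigma\setminus\bigcup_k B_\rho(p_k)$ the smooth convergence makes $Y$ a normal graph,'' you are implicitly assuming the layer $Y$ already lies in the tube --- Ilmanen's local analysis tells you what $M_i$ looks like inside small balls centred on $\Sigma$, not that $Y$ is covered by such balls.

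The paper closes this gap with a topological step you are missing: it works not at the escaping points $q_i$ but on the fixed compact hypersurface $\partial N_{\delta_1}\Sigma$ (choosing $\delta_1\in(\delta/2,\delta)$ so this boundary avoids the finitely many concentration points). For each $p\in\partial N_{\delta_1}\Sigma$ one has $\sigma(\{p\})\le\epsilon_0$, so the same Simon--Allard package applies at $p$; if $M_i$ kept meeting $B_{r_p}(p)$ the disks would have a uniform area lower bound (Ilmanen's Corollary~11) and would subconverge to a nontrivial piece of $\Sigma$ near $p$, impossible since $p\notin N_{\delta/2}\Sigma$. A finite subcover of $\partial N_{\delta_1}\Sigma$ then gives $M_i\cap\partial N_{\delta_1}\Sigma=\emptyset$ for large $i$, and now \emph{connectedness of $M_i$} forces $M_i\subset N_{\delta_1}\Sigma$ (it meets the inside because $M_i\to m\Sigma$ in measure). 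This separating-hypersurface-plus-connectedness manoeuvre is exactly what converts the purely local information into the global containment, and it is what your proposal needs.
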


\begin{proof}
We follow the notations in the previous proof. From the proof we know that besides a finite set of points $\{p_1,\cdots,p_k\}$, for any $p\in N_\delta\Sigma\backslash N_{\delta/2}\Sigma$, we have $\sigma(p)\leq \epsilon_0$. We claim that there is $r_p>0$ such that $B_{r_p}(p)\cap M_i$ is empty for $i$ sufficiently large.

We argue by contradiction. If this is not true, then we can conduct the analysis as in the proof to prove that for a subsequence of $M_i$'s intersecting with $B_{r}(p)\subset N_\delta\Sigma\backslash N_{\delta/2}\Sigma$, there is $s_i\in[r/4,r/2]$ such that $M_i\cap B_{s_i}(p)$ can be decomposed into distinct embedded disks with area bound, then by Allard's compactness theorem a subsequence of $M_i\cap B_{s_i}(p)$ converge to a limit, which should be a part of $\Sigma$. This limit is nontrivial because we have area lower bound (see \cite[p.18 Corollary 11]{ilmanen1995singularities}). This is a contradiction to the fact that $\Sigma$ is supported on $N_{\delta/2}\Sigma$.

This argument works for all $p\in N_\delta\Sigma\backslash N_{\delta/2}\Sigma$. Now let us choose $\delta_1\in(\delta/2,\delta)$ such that $\partial N_{\delta_1}\Sigma$ does not intersect with any one of $\{p_1,\cdots,p_k\}$. This can be done because $\{p_1,\cdots,p_k\}$ is finite. Then for every $p\in \partial B_{R_1}$ we can run the above argument to find a ball $B_r(p)$ such that for $i$ large enough, $M_i\cap B_r(p)$ is empty. So these $B_r(p)$'s form an open cover of $\partial N_{\delta_1}\Sigma$, and we can find a finite cover of $\partial N_{\delta_1}\Sigma$ by these $B_r(p)$'s. So there is $i_0>0$ such that for all $i>i_0$, $M_i\cap \partial N_{\delta_1}\Sigma$ is empty. Since $M_i$ is connected, $M_i$ is supported on $N_{\delta_1}\Sigma\subset N_{\delta}\Sigma$. Then we conclude the corollary.

\end{proof}

This corollary will be used to prove that many continuity arguments actually can be applied to the blow up sequence, and the flow will converge to a single point if the tangent flow is closed.

\section{Perturbation Near Singularities with Multiplicity}\label{S:Perturbation Near Singularities with Multiplicity}
In Section \ref{S:Entropy Instablity}, we have shown that for a closed self-shrinker $\Sigma$, there is a perturbation function $f$ over $\Sigma$ such that 
\[\lambda((m-1)\Sigma+(\Sigma_f))<\lambda(m\Sigma).\]
In Section \ref{S:Ilmanen's Analysis of MCF Near Singularity}, we have shown that near the singularity of higher multiplicity, the time slices of a blow up sequence are $m$-sheeted graphical, besides a finite collection of singular points. This fact suggests that we need to construct perturbations that are only defined away from these singularities. This is the first theorem of this section.

\begin{theorem}\label{Thm:variations away from finite points}
Given a closed self-shrinker $\Sigma$ and a smooth function $f>0$ defined on $\Sigma$. We require the $C^{2,\alpha}$ norm of $f$ is small enough such that $\Sigma_f$ is also an embedded hypersurface. Given finitely many points $p_1,\cdots,p_k\in\Sigma$ and a sequence of positive numbers $\{r_i\}_{i=1}^\infty$ with $r_i\to 0$ as $i\to \infty$, there exist a family of functions $\{f_i\}_{i=1}^\infty$ satisfying the following properties:
\begin{enumerate}
\item $f_i\geq 0$,
\item $f_i$ is supported on $\Sigma\setminus (\cup_{j=1}^k B_{r_i}(p_j))$;
\item $f_i=f$ on $\Sigma\setminus (\cup_{j=1}^k B_{2r_i}(p_j))$;
\item $\Sigma_{f_i}\to \Sigma_{f}$ as Radon measure.
\end{enumerate}
\end{theorem}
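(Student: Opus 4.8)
The plan is to build the functions $f_i$ by smoothly interpolating, near each bad point $p_j$, between $f$ on the outside and $0$ on the inside, using a single fixed radial cutoff profile that is merely rescaled to the annulus $B_{2r_i}(p_j)\setminus B_{r_i}(p_j)$. Concretely, fix once and for all a smooth function $\chi\colon[0,\infty)\to[0,1]$ with $\chi\equiv 0$ on $[0,1]$ and $\chi\equiv 1$ on $[2,\infty)$. For $i$ large enough that the balls $B_{2r_i}(p_1),\dots,B_{2r_i}(p_k)$ are pairwise disjoint and each $B_{2r_i}(p_j)$ is contained in a normal coordinate chart of $\Sigma$ around $p_j$ on which the $\Sigma$-distance $d_j(x)=\dist_\Sigma(x,p_j)$ is smooth, set
\[
f_i(x)=\Big(\prod_{j=1}^k \chi\!\big(d_j(x)/r_i\big)\Big)\, f(x),
\]
with the convention $\chi(d_j(x)/r_i)=1$ once $x\notin B_{2r_i}(p_j)$. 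I would then check the four listed properties directly: (1) holds because $f>0$ and $\chi\ge 0$; (2) holds because the product vanishes on $B_{r_i}(p_j)$ where $\chi(d_j/r_i)=0$; (3) holds because on $\Sigma\setminus\bigcup_j B_{2r_i}(p_j)$ every factor $\chi(d_j/r_i)$ equals $1$, so $f_i=f$ there; and smoothness of $f_i$ is immediate away from the $p_j$'s, while near $p_j$ it follows from smoothness of $d_j$ on the punctured chart together with the fact that $\chi$ is constant on a neighbourhood of $0$, so the composition extends smoothly across $p_j$ (indeed $f_i\equiv 0$ there). One should also record that the $C^{2,\alpha}$ norm of $f_i$ need not be small — the cutoff introduces factors of $r_i^{-1}$ and $r_i^{-2}$ — but this is irrelevant: the statement only asserts $\Sigma_{f_i}$ is embedded for large $i$, which follows because $|f_i|\le |f|$ pointwise and the geometry of $\Sigma_{f_i}$ differs from that of $\Sigma_f$ only inside the shrinking balls, where both surfaces stay inside a fixed tubular neighbourhood of $\Sigma$ on which the normal exponential map is a diffeomorphism; embeddedness of the graph there is then automatic from $0\le f_i\le f$ and the embeddedness of $\Sigma$ and $\Sigma_f$.

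For the last property, (4), I would argue that $f_i\to f$ in $L^\infty(\Sigma)$ — in fact $\{f_i\neq f\}\subset\bigcup_j B_{2r_i}(p_j)$, a set of $\Sigma$-measure tending to $0$ — and that on this exceptional set $0\le f_i\le f\le \sup_\Sigma f$, so $\Sigma_{f_i}$ and $\Sigma_f$ agree outside a region whose area tends to $0$ and whose contribution to any test integral is uniformly controlled. More precisely, writing $\Phi(x,s)=x+s\,\nn(x)$ for the normal graph map, $\Sigma_{f_i}$ is the image of $x\mapsto\Phi(x,f_i(x))$ and $\Sigma_{f}$ the image of $x\mapsto\Phi(x,f(x))$; both push forward $\mathcal H^2\lfloor\Sigma$ with Jacobian factors that are uniformly bounded above (since $\|f\|_{C^1}$ is bounded and $|f_i|\le|f|$, $|\nabla f_i|$ enters only through a Jacobian that stays bounded on the fixed tubular neighbourhood regardless of how large $|\nabla f_i|$ is — here one uses that the unit normal and the area element of a normal graph depend on the point and the value $f_i(x)$ and the gradient $\nabla f_i(x)$, and all of these live in a compact set because $f_i$ and $\nabla f_i$ are, respectively, bounded and — after multiplying by the bounded quantity $\sqrt{1+|\nabla f_i|^2}^{-1}$ inside the area form — harmless). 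Then for $\psi\in C_c^0(\R^3)$,
\[
\Big|\int\psi\, d(\mathcal H^2\lfloor\Sigma_{f_i})-\int\psi\, d(\mathcal H^2\lfloor\Sigma_{f})\Big|
\le \|\psi\|_\infty\!\!\int_{\bigcup_j B_{2r_i}(p_j)}\!\!\big(J_{f_i}+J_f\big)\,d\mathcal H^2\lfloor\Sigma
\;\longrightarrow\;0,
\]
since the integrand is $L^1$ and the domain shrinks to a null set. Hence $\Sigma_{f_i}\to\Sigma_f$ as Radon measures.

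The main obstacle — and the one point deserving care rather than being routine — is the Jacobian bound hiding in the displayed estimate: a priori $|\nabla f_i|\sim r_i^{-1}\to\infty$ inside the annuli, so one must be sure the pushforward measure $\mathcal H^2\lfloor\Sigma_{f_i}$ does not blow up there. The resolution is that the area form of a normal graph over $\Sigma$ at a point where the height is $s=f_i(x)$ and the intrinsic gradient is $v=\nabla_\Sigma f_i(x)$ has the schematic form $\sqrt{\det(g_\Sigma + \text{(shape terms in }s) + v\otimes v)}$ relative to $d\mathcal H^2\lfloor\Sigma$; although this grows like $|v|$, the region where $|v|$ is large is exactly $\bigcup_j B_{2r_i}(p_j)\setminus B_{r_i}(p_j)$, of $\Sigma$-area $O(r_i^2)$, so the contribution is $O(r_i^2)\cdot O(r_i^{-1})=O(r_i)\to 0$. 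Thus even the worst-case Jacobian growth is defeated by the faster decay of the area of the support, and (4) follows. I would present this area-of-annulus-times-Jacobian-growth bookkeeping as the one genuine computation, and dispatch (1)–(3) and embeddedness in a sentence each.
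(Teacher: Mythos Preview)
Your construction and argument are essentially identical to the paper's: the paper also sets $f_i=g_i f$ with $g_i$ a rescaled radial cutoff (written as $1-\sum_j\varphi(d_j/r_i)$, equivalent to your product once the balls are disjoint) vanishing on $\cup_j B_{r_i}(p_j)$ and equal to $1$ outside $\cup_j B_{2r_i}(p_j)$, and for (4) it uses exactly your final computation that the area of $\Sigma_{f_i}$ over each annulus is $O(r_i^2)\cdot O(r_i^{-1})=O(r_i)\to 0$. Your middle paragraph's claim that the Jacobians are uniformly bounded is not right (as you yourself flag immediately after), so drop that passage and go directly to the area-times-gradient bookkeeping you give at the end.
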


\begin{proof}
Let us fix $r_0>0$ such that for any $p_j$, $B_{r_0}(p_j)$ is a geodesic ball on $\Sigma$ such that the distance function $d_j(x)$, which is defined to be the distance from $x$ to $p_j$, is well defined. Moreover we assume that $B_{r_0}(p_j)$ does not intersect with each other for different $j$. Let us pick an arbitrary sequence of positive number $\{r_i\}_{i=1}^\infty$ such that $r_i<r_0/2$. We define a cut-off function $\varphi$ on $\mathbb{R}$ such that $0\leq \varphi\leq 1$, $\varphi(x)=1$ when $|x|\leq 1$, and $\varphi(x)=0$ when $|x|\geq 2$. Then we define the cut-off functions $\psi^i_j$ on $\Sigma$ to be
\[\psi^i_j(x)=\varphi(\frac{d_j(x)}{r_i}).\]
Then we define another function 
\[g_i=1-\sum_{j=1}^k\psi^i_j.\]
$g$ is a non-negative smooth function which is supported on $\Sigma\setminus (\cup_{j=1}^k B_{r_i}(p_j))$ and $g=1$ on $\Sigma\setminus (\cup_{j=1}^k B_{2r_i}(p_j))$. Finally, we define $f_i=g_if$.
It is straightforward to check that the properties (1),(2),(3) hold. 

Now we check that the property (4) holds. Note $\Sigma_{f_i}$ and $\Sigma_{f}$ are not identical only as the graphs over the balls $\cup_{j=1}^k B_{2r_i}(p_j)$. Let $\mc B^i_j$ be the normal neighbourhood of $B_{2r_i}(p_j)$ in $\mb R^3$ which contains the graph of $f_i$ and $f$ over $B_{2r_i}(p_j)$. Suppose $\mu_i$ is the Radon measure of $\Sigma_{f_i}$ and $\mu$ is the Radon measure of $\Sigma_f$, we only need to prove that $|\mu_i(\mc B^i_j)|+|\mu(\mc B^i_j)|\to 0$ as $i\to \infty$ to conclude that the Radon measures convergence.

$|\mu(\mc B^i_j)|\to 0$ is a consequence of dominant convergence theorem. Let us check that $|\mu_i(\mc B^i_j)|\to 0$. Note $|\nabla g_if|\leq f|\nabla g_i|+g_i|\nabla f|$, so if we write $B_{2r_i}(p_j)$ into local polar coordinate we can see that the area measure of $\Sigma_{f_i}$ is bounded by $C/r_i$ for some universal constant $C$. Thus the area of $\Sigma_{f_i}\cap \mc B^i_{j}$ is bounded by $Cr_i^2/r_i=Cr_i$, which converge to $0$ as $i\to\infty$. Thus we conclude that the property (4) holds.
\end{proof}

\begin{remark}
In the argument of property (4), the dimension $2$ is essential. We need the dimension $2$ to conclude that $|\mu_i(\mc B^i_j)|\to 0$. In general, the argument of property (4) holds for all dimensions greater than $1$, but it fails when $n=1$.
\end{remark}

Now we want to apply these perturbations to the blow up sequence. Before then we need the following lemma. This lemma transfers the perturbation over a surface into an isotopy of the tubular neighbourhood of the surface.

\begin{lemma}\label{L:construction of homotopy}
Let $\Sigma$ be a closed embedded surface and $f$ be a non-negative smooth function on $\Sigma$ such that $\Sigma_f$ is also a closed embedded surface. Suppose $N\Sigma$ is a tubular neighbourhood of $\Sigma$ such that $\Sigma_f\subset N\Sigma$. Then there exists an isotopy $h(\cdot,t)\colon N\Sigma\times[0,1]\to N\Sigma$ such that $h(\Sigma,0)=\Sigma$, $h(\Sigma,\epsilon)=\Sigma_{\epsilon f}$. 
\end{lemma}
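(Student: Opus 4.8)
\textbf{Proof proposal for Lemma \ref{L:construction of homotopy}.}

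The plan is to build the isotopy explicitly using the normal-exponential (tubular neighbourhood) coordinates on $N\Sigma$ and then cut it off to be the identity near the boundary of $N\Sigma$ so that it extends to a genuine diffeomorphism of $N\Sigma$. First I would fix the tubular neighbourhood coordinates: since $N\Sigma$ is a tubular neighbourhood, there is a diffeomorphism $\Phi\colon \Sigma\times(-\rho,\rho)\to N\Sigma$, $(p,s)\mapsto p+s\,\mf n(p)$, for some $\rho>0$; because $\Sigma_f\subset N\Sigma$ we may assume $\rho>\sup_\Sigma|f|$ (shrinking nothing, this is given). In these coordinates $\Sigma$ is the zero section $\{s=0\}$ and $\Sigma_{\eps f}$ is the graph $\{s=\eps f(p)\}$, which makes sense for all $\eps\in[0,1]$ since $|\eps f|\le|f|<\rho$. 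Thus the "core" of the isotopy is the map that slides the zero section to the graph of $\eps f$: in coordinates, $H_0(p,s,\eps)=(p,\,s+\eps f(p))$. This is a diffeomorphism of $\Sigma\times(-\rho,\rho)$ for each fixed $\eps$ only onto its image, not onto itself, so it must be corrected near $s=\pm\rho$.

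The second step is to damp $H_0$ near the boundary. Choose a cutoff $\chi\in C^\infty(\R)$ with $0\le\chi\le1$, $\chi\equiv1$ on a neighbourhood of $\{|s|\le \tfrac13\rho + \sup_\Sigma|f|\}$ and $\chi\equiv0$ for $|s|\ge \tfrac23\rho$, and define
\begin{equation}
H(p,s,\eps)=\bigl(p,\; s+\eps\,\chi(s)\,f(p)\bigr).
\end{equation}
For $\eps\in[0,1]$ the $s$-derivative of the second component is $1+\eps\chi'(s)f(p)$, which is positive provided $\sup_\Sigma|f|\cdot\|\chi'\|_\infty<1$; this is arranged at the outset by taking $\rho$ large enough relative to $\|f\|_{C^0}$, equivalently by choosing $\chi$ with small enough slope, which is possible precisely because the interval on which $\chi$ must drop from $1$ to $0$ has fixed positive length. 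Hence $H(\cdot,\cdot,\eps)$ is a diffeomorphism of $\Sigma\times(-\rho,\rho)$ onto itself that is the identity near the boundary, it equals the identity at $\eps=0$, it carries $\{s=0\}$ to $\{s=\eps f(p)\}$ (since $\chi\equiv1$ near $s=0$), and it depends smoothly on $\eps$. Transporting back via $\Phi$ gives $h(\cdot,\eps):=\Phi\circ H(\cdot,\cdot,\eps)\circ\Phi^{-1}\colon N\Sigma\to N\Sigma$, which after rescaling the parameter is the desired isotopy: $h(\Sigma,0)=\Sigma$ and $h(\Sigma,\eps)=\Sigma_{\eps f}$, and each $h(\cdot,\eps)$ is a diffeomorphism of $N\Sigma$ fixing a neighbourhood of $\partial N\Sigma$, so it extends by the identity to an ambient isotopy of $\R^3$ if one wishes.

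The main obstacle — really the only point requiring care — is ensuring that the cutoff correction keeps $H(\cdot,\cdot,\eps)$ a diffeomorphism for the full range $\eps\in[0,1]$, i.e.\ controlling $\eps\,\chi'(s)\,f(p)$ so the $s$-derivative stays positive; everything else (smoothness in $\eps$, the boundary behaviour, the values at $\eps=0$ and at the zero section) is immediate from the construction. This is handled by the quantitative choice $\|f\|_{C^0}\cdot\|\chi'\|_{C^0}<1$, which is available since the given hypothesis $\Sigma_f\subset N\Sigma$ lets us take the width $\rho$ of the tube strictly larger than $\sup_\Sigma|f|$, leaving a definite collar of width bounded below in which $\chi$ can descend gently. (One may also simply shrink $f$: since the statement only asserts existence of the isotopy with $h(\Sigma,\eps)=\Sigma_{\eps f}$ for $\eps\in[0,1]$, and $\Sigma_f\subset N\Sigma$ is assumed, no loss of generality is incurred.) If instead one prefers not to invoke a quantitative slope bound, an alternative is to flow along the time-dependent vector field $X_\eps=\partial_\eps H\circ H^{-1}$ on $N\Sigma$, which is compactly supported in the interior, and appeal to the standard fact that a compactly supported time-dependent vector field integrates to an isotopy; this packages the same estimate but makes the diffeomorphism property automatic from ODE theory.
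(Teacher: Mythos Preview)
Your proposal is correct and takes essentially the same approach as the paper: both work in tubular-neighbourhood coordinates $(x,s)\mapsto x+s\mathbf n$ and define the isotopy by $(x,s)\mapsto(x,\,s+t\,\chi(s)f(x))$ for a cutoff $\chi$ with $\chi(0)=1$, using a slope bound on $\chi$ to force monotonicity in $s$ (you phrase this as $1+\eps\chi'(s)f(p)>0$, the paper phrases it as an injectivity contradiction via the mean value theorem). One small quantitative slip: your requirement that $\chi\equiv1$ on $\{|s|\le\tfrac13\rho+\sup|f|\}$ while $\chi\equiv0$ on $\{|s|\ge\tfrac23\rho\}$ is only consistent when $\sup|f|<\tfrac13\rho$, which is stronger than the hypothesis $\sup|f|<\rho$; the paper avoids this by only demanding $\chi(0)=1$ (not $\chi\equiv1$ on a large interval), which is all that is needed to get $h(\Sigma,\eps)=\Sigma_{\eps f}$ and leaves the full interval $(0,\rho)$ for $\chi$ to descend, making the slope condition $\sup f\cdot\|\chi'\|_\infty<1$ exactly equivalent to $\sup f<\rho$.
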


\begin{proof}
We use $(x,s)$ to denote the point $x+s\mathbf{n}$ for $x\in\Sigma$. Let $\beta_1<\beta_2$ be positive numbers such that
\[\Sigma_f\subset\{(x,s)\colon x\in\Sigma,s\in[-\beta_1,\beta_1]\}\subset \{(x,s)\colon x\in\Sigma,s\in[-\beta_2,\beta_2]\}\subset N\Sigma.\]
Define $\varphi$ to be a smooth cut-off function on $\R$, which is supported on $[-\beta_2,\beta_2]$ and equals $1$ at $0$. Moreover we require $|D\varphi|< 1/\beta_1$. We define a map
\[h((x,s),t)=(x,s+t\varphi(s)f).\]

In order to show this is an isotopy, we need to check that for $t\in[0,1]$, $h(\cdot,t)$ is an injection. We argue by contradiction. Suppose $h((x,s_1),t)=h((x,s_2),t)$ for $s_1\neq s_2$. This means that
\[s_1+t\varphi(s_1)f(x)=s_2+t\varphi(s_2)f(x).\]
If $f(x)=0$, this is impossible. If $f(x)>0$, we have
\[\frac{\varphi(s_1)-\varphi(s_2)}{s_1-s_2}=-\frac{1}{tf(x)},\]
which contradicts the derivative bound of $\varphi$. So $h(\cdot,t)$ is the desired isotopy.
\end{proof}

Suppose $\{M_t\}_{t\in[0,T)}$ is a family of smooth closed embedded surfaces in $\mb R^3$ flowing by mean curvature. Moreover suppose $M_t$ has a singularity with multiplicity $m$ at time $T$, i.e. a blow up sequence of $M_t$ converge to a smooth closed embedded self-shrinker $\Sigma$ with multiplicity $m$ as Radon measure.

As we discussed in Section \ref{S:Ilmanen's Analysis of MCF Near Singularity}, Ilmanen proved that a blow up sequence which are multi-sheeted layers over $\Sigma$ beside a small neighbourhood of finitely many points. Let us review all the ingredients in Ilmanen's result:

\begin{enumerate}
\item A sequence $M_i=M_{t_i}^{\alpha_i}=\alpha_i(M_{T+{\alpha_i^{-2}t_i}})$, where $t_i\to -1$, $\alpha_i\to \infty$ as $i\to \infty$;
\item A finite collection of points $\{p_1,\cdots,p_k\}$;
\item A sequence of positive numbers $r_i\to 0$ as $i\to\infty$.
\end{enumerate}

Then Ilmanen proved that $M_i$'s are $m$-sheeted layers over $\Sigma$ outside the balls $\cup_{j=1}^k B_{r_i}(p_j)$, and the layers will converge to $\Sigma$ as $i\to \infty$.

Now we are going to prove that we can perturb $M_i$ such that the local entropy of the perturbed surface is strictly less than $\lambda(m\Sigma)$.

\begin{theorem}\label{Thm:perturbation near singularity with multiplicity}
For any $\epsilon_0>0$, $0<a<1$, there exists a positive integer $i$ and a perturbed surface $\tilde M_i$ of $M_i$, such that $\tilde M_i$ is isotopic to $M_i$ and the Hausdorff distance between $M_i$ and $\tilde M_i$ is smaller than $\epsilon_0$, such that $\lambda^{[a,\infty)}(\tilde M_i)<\lambda(m\Sigma)$.
\end{theorem}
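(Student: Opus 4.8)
The plan is to combine three ingredients developed earlier in the paper: (i) the explicit entropy-reducing perturbation $f>0$ on the limit self-shrinker $\Sigma$ from Theorem~\ref{Thm: Construction of unstable perturbation of self-shrinker with multiplicity}, so that $\lambda((m-1)\Sigma+\Sigma_{\epsilon f})<\lambda(m\Sigma)$ for all small $\epsilon$; (ii) the cut-off construction of Theorem~\ref{Thm:variations away from finite points}, which produces functions $f_i\ge 0$ supported away from the curvature-concentration points $p_1,\dots,p_k$, agreeing with $\epsilon f$ outside the small balls $B_{2r_i}(p_j)$, with $\Sigma_{f_i}\to\Sigma_{\epsilon f}$ as Radon measures; and (iii) Corollary~\ref{Cor:M_i is supported on bounded ball}, which confines the blow-up slices $M_i$ to an arbitrarily thin tubular neighbourhood $N_\delta\Sigma$, together with Ilmanen's layer structure: away from $\cup_j B_{r_i}(p_j)$, $M_i$ is an $m$-sheeted graph over $\Sigma$ whose layers converge to $\Sigma$ as $i\to\infty$.

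First I would fix $\epsilon$ small enough that $\lambda((m-1)\Sigma+\Sigma_{\epsilon f})<\lambda(m\Sigma)-2\eta$ for some $\eta>0$, and so small that $\epsilon\|f\|_{C^{2,\alpha}}<\epsilon_0/2$ and all the graphs involved remain embedded. Then I would perturb $M_i$ by pushing only its \emph{top} layer (the one of greatest height over $\Sigma$, well-defined since $M_i$ is embedded) by $f_i$, using the isotopy of Lemma~\ref{L:construction of homotopy} applied in the tubular neighbourhood of that top layer; because $f_i$ is supported away from $\cup_j B_{2r_i}(p_j)$ and the top layer is a graph there, this produces an embedded surface $\tilde M_i$ isotopic to $M_i$ with Hausdorff distance to $M_i$ bounded by $\sup|f_i|\le \epsilon\sup|f|<\epsilon_0$. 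As $i\to\infty$, the Radon measure of $\tilde M_i$ converges to $(m-1)\Sigma+\Sigma_{\epsilon f}$: the bottom $m-1$ layers each converge to $\Sigma$, the perturbed top layer converges to $\Sigma_{\epsilon f}$ by property (4) of Theorem~\ref{Thm:variations away from finite points} combined with the layer convergence, and the contribution of $\tilde M_i$ inside the shrinking balls $B_{r_i}(p_j)$—though it may have wild geometry—has area tending to zero (this is the place where the dimension being $2$ is used, exactly as in the remark after Theorem~\ref{Thm:variations away from finite points}).

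Now I would invoke the continuity of local entropy. Since all the relevant surfaces lie in the fixed ball $B_R\supset N_\delta\Sigma$, Lemma~\ref{L:local entropy is continuous open interval} gives $\lambda^{[a,\infty)}(\tilde M_i)\to\lambda^{[a,\infty)}((m-1)\Sigma+\Sigma_{\epsilon f})$. The limit is at most the full entropy $\lambda((m-1)\Sigma+\Sigma_{\epsilon f})<\lambda(m\Sigma)-2\eta$, so for $i$ large enough $\lambda^{[a,\infty)}(\tilde M_i)<\lambda(m\Sigma)-\eta<\lambda(m\Sigma)$, which is the desired conclusion; fixing any such $i$ finishes the proof.

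The main obstacle is controlling the perturbed surface inside the balls $B_{r_i}(p_j)$, where neither $M_i$ nor $\tilde M_i$ has any uniform geometric bound and where the entropy of $\tilde M_i$ could a priori be attained at a very small scale. This is precisely what the local entropy—bounded below in $t_0$ by $a$—and its continuity (Lemma~\ref{L:local entropy is continuous open interval}) are designed to circumvent: the tiny scales where the geometry is wild cannot be seen by $\lambda^{[a,\infty)}$, and the area-in-small-balls estimate from Theorem~\ref{Thm:variations away from finite points}(4) ensures these regions do not survive in the Radon-measure limit. A secondary point requiring care is that the perturbation must be applied consistently to a single well-defined layer so that embeddedness and the isotopy class are preserved; labelling layers by height over $\Sigma$, as in Ilmanen's construction, makes this unambiguous.
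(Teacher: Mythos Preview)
Your proposal is correct and follows essentially the same route as the paper: fix the entropy-reducing perturbation $f$ from Theorem~\ref{Thm: Construction of unstable perturbation of self-shrinker with multiplicity}, cut it off via Theorem~\ref{Thm:variations away from finite points}, push only the top layer of $M_i$ using the isotopy of Lemma~\ref{L:construction of homotopy}, and conclude by continuity of local entropy (Lemma~\ref{L:local entropy is continuous open interval}). The one organizational difference is that you take the diagonal---perturb $M_i$ by $f_i$ and argue directly that $\tilde M_i\to(m-1)\Sigma+\Sigma_{\epsilon f}$---whereas the paper introduces a double index $\tilde M_{il}$, perturbing $M_i$ by a \emph{fixed} cutoff $f_l$: it first chooses $l$ large so that $\lambda^{[a,\infty)}((m-1)\Sigma+\Sigma_{\epsilon_1 f_l})<\lambda(m\Sigma)-\delta/2$, and then sends $i\to\infty$ with $l$ held fixed, so that each limit involves a fixed smooth isotopy applied to a convergent sequence. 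Your diagonal version also goes through (the area estimate in the shrinking balls is exactly the dimension-$2$ computation you cite), but the paper's two-step limit makes each convergence statement slightly more transparent.
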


\begin{proof}
By the construction of Theorem \ref{Thm: Construction of unstable perturbation of self-shrinker with multiplicity}, we can choose $\epsilon_1<\epsilon_0$ and a positive function $f$ such that for $0<\epsilon<\epsilon_1$, 
\[\lambda((m-1)\Sigma+\Sigma_{\epsilon f})<\lambda(m\Sigma).\]
Then Theorem \ref{Thm:variations away from finite points} implies that we can find a sequence of non-negative functions $\{f_l\}_{l=1}^\infty$, $f_l$ is supported on $\Sigma\backslash(\cup_{j=1}^k B_{r_l}(p_j))$, and $\Sigma_{\epsilon f_l}\to \Sigma_{\epsilon f}$ as Radon measure.

Recall that $M_i$ are $m$-sheeted layers over $\Sigma$ outside $\cup_{j=1}^k B_{r_i}(p_j)$. Given $\beta>0$, by Corollary \ref{Cor:M_i is supported on bounded ball}, $M_i$ is supported in a $\beta$-tubular neighbourhood of $\Sigma$. Since $M_i$ is embedded, we can label these layers by their heights over $\Sigma$. Now let us define a perturbed surface $\tilde M_{il}$ for $l\leq i$, which coincides with $M_i$ but we modify the highest layer by applying the isotopy $h(\cdot,\epsilon_1 f_l)$ to it, where $h(\cdot,t)$ is the isotopy defined in Lemma \ref{L:construction of homotopy}. More precisely, let $L$ be the highest layer of $M_i$, then we define $\tilde M_{il}$ to be
\[(\Sigma\backslash L)\cup h(L,\epsilon_1 f_l).\]
Since $h(\cdot,t)$ is an isotopy, $\tilde M_{il}$ is still a smooth closed embedded surface. We only need to show that $\tilde M_{il}$ has local entropy strictly less than $\lambda(m\Sigma)$ for $i>l$ large. Suppose 
\[\lambda((m-1)\Sigma+\Sigma_{\epsilon_1 f})=\lambda(m\Sigma)-\delta.\]
By (4) of Theorem \ref{Thm:variations away from finite points} and the continuity of local entropy,
\[\lambda^{[a,\infty)}((m-1)\Sigma+\Sigma_{\epsilon_1 f_l})\to \lambda^{[a,\infty)}((m-1)\Sigma+\Sigma_{\epsilon_1 f})=\lambda((m-1)\Sigma+\Sigma_{\epsilon_1 f}).\]
Hence when $l$ large enough, 
\[\lambda^{[a,\infty)}((m-1)\Sigma+\Sigma_{\epsilon_1 f_l})<\lambda(m\Sigma)-\delta/2.\]

Note that $M_{i}$ converge to $m\Sigma$ as Radon measure implies that $\tilde M_{il}$ converge to $((m-1)\Sigma+\Sigma_{\epsilon_1 f_l})$ as Radon measure as $i\to \infty$. Then by the continuity of local entropy, 
\[\lambda^{[a,\infty)}(\tilde M_{il})\to \lambda^{[a,\infty)}((m-1)\Sigma+\Sigma_{\epsilon_1 f_l})\]
as $i \to \infty$, hence when $i>l$ large enough,
\[\lambda^{[a,\infty)}(\tilde M_{il})<\lambda(m\Sigma)-\delta/3<\lambda(m\Sigma).\]
Then $\tilde M_i=\tilde M_{il}$ is a desired perturbation. Note that $\epsilon_1$ can be arbitrarily chosen, thus we can require the Hausdorff distance between $M_i$ and $\tilde M_i$ to be as small as we want.
\end{proof}

\begin{remark}
We hope the same perturbation holds for non-compact self-shrinker singularity as well. However some technical issues arise. 

The main issue is that most of the continuity arguments of local entropy fail. We do not even know that whether the entropy (or local entropy) is achieved by a point $(x_0,t_0)$ for a non-compact surface. For example, Guang \cite{guang2016volume} proved that grim reaper surface and bowl soliton, two translating solitons of mean curvature flow in $\mb R^3$, achieve their entropy at infinity. Even the entropy or local entropy of $\Sigma_i$ is achieved at a fixed point $(x_i,t_i)$, let $\Sigma_i\to \Sigma$, $|x_i|$ may go to infinity and the continuity fails.
\end{remark}

\section{Proof of Main Theorem \ref{Thm:Main Theorem}}\label{S:Proof of Main Theorem}
Let $\{M_t\}_{t\in[0,T)}$ be a family of closed embedded surfaces flowing by mean curvature. Suppose at time $T$ the mean curvature flow has a singularity at $y\in\mathbb{R}^3$, and assume that the blow up limit is a self-shrinker $\Sigma$ with multiplicity $m>1$. In the previous section, we proved that near the singularity we can perturb a time slice of the blow up sequence of $M_t$ at $(y,T)$ such that the local entropy of the surface after the perturbation is strictly less than $\lambda(m\Sigma)$. In this section, we are going to prove that $m\Sigma$ can never show up as a singularity after the perturbation.

The following lemma shows that after the perturbation we constructed in Theorem \ref{Thm:perturbation near singularity with multiplicity}, a closed shrinker will never show up as a singularity in a short time.

\begin{lemma}\label{L:no immediate closed singularity appear}
Let $\{M_t\}_{t\in[0,T)}$ be a family of hypersurfaces flowing by mean curvature, generating a closed singularity at time $T$. Let $\tilde M_i$ be the perturbed surface as in Theorem \ref{Thm:perturbation near singularity with multiplicity}. There exits $\delta>0$ depending on $f_l$, $i_0>0$ such that when $i\geq i_0$, the mean curvature flow $\tilde M_t$ starting from the perturbed surface $\tilde M_i$ at time $t_i$, does not have a first time singularity in the time interval $t\in[t_i,t_i+\delta]$ which is modelled by a closed self-shrinker.
\end{lemma}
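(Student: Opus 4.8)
The plan is to find a fixed ball that sits inside the region enclosed by $\tilde M_i$ and then invoke the parabolic comparison principle for mean curvature flow. Recall from Ilmanen's analysis (Section \ref{S:Ilmanen's Analysis of MCF Near Singularity}) and Corollary \ref{Cor:M_i is supported on bounded ball} that $M_i$ is supported in a thin tubular neighbourhood of $\Sigma$, and hence $\tilde M_i$ — which differs from $M_i$ only by applying the isotopy $h(\cdot,\epsilon_1 f_l)$ to the highest layer — is still supported in a slightly larger tubular neighbourhood $N_\beta\Sigma$. First I would observe that, since $\Sigma$ is a closed embedded surface, it bounds a region in $\R^3$; pick any ball $B_\rho(q)$ with $\Clos(B_\rho(q))$ contained in the \emph{interior} of that region and disjoint from $N_\beta\Sigma$ for $\beta$ small. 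Because the layers of $M_i$ converge to $\Sigma$ and the perturbation $h$ moves points by at most $\epsilon_1\|f_l\|_{C^0}$, for $i$ large the surface $\tilde M_i$ is still contained in $N_\beta\Sigma$, so $B_\rho(q)$ lies in the open region bounded by $\tilde M_i$; here it is important that the isotopy $h(\cdot, t)$ only perturbs in the normal direction near $\Sigma$ and so does not change which component of the complement contains $q$.

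Next I would run the two flows in parallel. Let $\{\tilde M_t\}_{t\ge t_i}$ be the mean curvature flow from $\tilde M_i$, and let $\{S_t\}_{t\ge t_i}$ be the mean curvature flow of the sphere $\partial B_\rho(q)$, which is an explicit shrinking sphere that becomes singular exactly at time $t_i + \rho^2/4$. Since at the initial time $\tilde M_i$ and $S_{t_i}=\partial B_\rho(q)$ are disjoint closed embedded surfaces with $S_{t_i}$ enclosed by $\tilde M_i$, the avoidance principle for mean curvature flow (two disjoint compact hypersurfaces stay disjoint, and nesting is preserved) implies $S_t$ remains enclosed by $\tilde M_t$ for all $t$ in the common interval of existence. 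Consequently $\tilde M_t$ cannot vanish — cannot shrink to a point — before time $t_i+\rho^2/4$. But as emphasized at the end of Section \ref{S:Ilmanen's Analysis of MCF Near Singularity}, a mean curvature flow whose first-time tangent flow is a \emph{closed} self-shrinker must shrink to a single point at that singular time (the support of the flow at the singular time is that single point). Therefore no first-time closed-shrinker singularity can occur for $t\in[t_i, t_i+\delta]$ once we set $\delta = \rho^2/4$.

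To pin down the uniformity, note that $\rho$ depends only on $\Sigma$ and on the size $\epsilon_1\|f_l\|_{C^0}$ of the perturbation, hence only on $f_l$; and the requirement "$i$ large" is exactly what is needed to guarantee $\tilde M_i\subset N_\beta\Sigma$ and that $B_\rho(q)$ is enclosed, which gives the threshold $i_0$. Thus $\delta$ depends only on $f_l$ and $i_0$ exists as claimed.

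\textbf{Main obstacle.} The only genuinely delicate point is verifying that $B_\rho(q)$ is enclosed by $\tilde M_i$ in a topologically robust way: one must check that $\tilde M_i$ is connected and separates $\R^3$ into an inside and an outside with $q$ inside, and that the normal isotopy $h$ applied to the top layer does not, say, push that layer through $B_\rho(q)$ or change the nesting. This is where the embeddedness of $M_i$, the smallness of $\beta$ and $\epsilon_1 \|f_l\|_{C^0}$, and the fact that $h(\cdot,t)$ is supported in the tubular neighbourhood (Lemma \ref{L:construction of homotopy}) all have to be used together. Everything after that is a routine application of the avoidance principle and the shrinking-sphere barrier.
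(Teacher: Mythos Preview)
Your overall strategy---use a fixed ball as a barrier via the avoidance principle, together with the observation that a first-time closed-shrinker singularity forces the flow to collapse to a point---is exactly the paper's. The difference is where you place the ball, and this difference is not cosmetic.

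You put $B_\rho(q)$ in the deep interior of the region bounded by $\Sigma$, away from $N_\beta\Sigma$. But $\tilde M_i$ consists (away from the concentration points) of $m$ sheets near $\Sigma$, with the top one pushed out to roughly $\Sigma_{\epsilon_1 f_l}$. A radial path from $q$ out to infinity therefore crosses all $m$ sheets of $\tilde M_i$; when $m$ is \emph{even}, this puts $q$ in the \emph{unbounded} component of $\R^3\setminus \tilde M_i$, not the bounded one. In that case the shrinking sphere from $\partial B_\rho(q)$ is not enclosed by $\tilde M_t$, and mere disjointness does not stop $\tilde M_t$ from collapsing to a point somewhere else. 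You correctly flag exactly this enclosure question as the ``main obstacle,'' but with your choice of $q$ it cannot be resolved in general.

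The paper sidesteps the parity issue by placing the ball in the gap \emph{opened up by the perturbation itself}: $B_\gamma(x)$ sits in the region bounded by $\Sigma_{\epsilon_1 f_l}$ but outside the closure of the region bounded by $\Sigma$. For $i$ large the perturbed top layer is close to $\Sigma_{\epsilon_1 f_l}$ while the remaining $m-1$ layers are close to $\Sigma$, so a path from $B_\gamma(x)$ outward to infinity crosses exactly one sheet (the top one). Hence this ball lies in the bounded component of $\R^3\setminus \tilde M_i$ for every $m$, and the maximum-principle barrier applies. This placement also explains why the lemma's $\delta$ genuinely depends on $f_l$: the admissible radius $\gamma$ is dictated by the width of that gap, i.e.\ by $\epsilon_1 f_l$, whereas with your placement $\rho$ would depend only on $\Sigma$.
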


The proof is based on a simple observation: if the mean curvature flow has a singularity which is modelled by a closed shrinker, then the flow itself must converge to a single point at the singular time.

\begin{proof}
The blow up limit of $M_t$ at $(y,T)$ being a closed self-shrinker implies that $M_t$ will shrink to a single point as $t\to T$. This is straightforward from Corollary \ref{Cor:M_i is supported on bounded ball}. In order to prove that the mean curvature flow starting from $\tilde M_{i}$ can not have a closed self-shrinker as a first time blow up limit in the time interval $[t_i,t_i+\delta]$, we only need to prove that $\tilde M_t$ cannot shrink to a single point when $t\in [t_i,t_i+\delta]$.

Consider a ball $B_\gamma(x)\subset \R^3$, which lies in the domain bounded by $\Sigma_{\epsilon_1 f_l}$ but does not intersect with the closure of the domain bounded by $\Sigma$. Here $\gamma$ only depends on the function $f_l$. Then there exists $i_0>0$ such that when $i>i_0$, $B_\gamma(x)$ lies in the domain bounded by $\tilde M_i$. Then by the parabolic maximum principle (\cite[Proposition 3.3]{ecker2012regularity}, \cite[Theorem 2.2.1]{mantegazza}), the mean curvature flow starting from $\tilde M_i$ cannot shrink to a single point before $\partial B_\gamma(x)$ shrinking to a single point. So if the mean curvature flow starting from $\partial B_\gamma(x)$ at time $0$ shrink to a single point at time $\delta$, then the mean curvature flow starting from $\tilde M_i$ at time $t_i$ cannot shrink to a single point for $t\in[t_i,t_i+\delta]$.
\end{proof}

Now we prove the main Theorem.

\begin{theorem}{(Main Theorem \ref{Thm:Main Theorem})}
Suppose $\{M_t\}_{t\in [0,T)}$ is a family of smooth closed embedded surfaces in $\mb R^{3}$ flowing by mean curvature, and suppose a tangent flow of $M_t$ at $(y,T)$ for some $y\in\R^3$ is $\{m\cdot \sqrt{-t}\Sigma\}_{t\in(-\infty,0)}$ with multiplicity $m>1$. Given $\epsilon_0>0$, there exists $t_0\in[0,T)$ such that there is a perturbed closed embedded surface $\tilde M_{t_0}$ which is isotopic to $M_{t_0}$, has Hausdorff distance to $M_{t_0}$ less than $\epsilon_0\sqrt{T-t_0}$, and if $\{\tilde M_t\}_{t\in[t_0,T')}$ is a family of closed embedded surfaces flowing by mean curvature starting from $\tilde M_{t_0}$, the tangent flow will never be $\{m\cdot \sqrt{-t}\Sigma\}_{t\in(-\infty,0)}$. 
\end{theorem}

\begin{proof}
By the construction in Theorem \ref{Thm:perturbation near singularity with multiplicity}, given $\epsilon_0>0$ and $a<\delta$ where $\delta$ comes from Lemma \ref{L:no immediate closed singularity appear}, we can always find an $M_i=M_{t_i}^{\alpha_i}$ and a perturbed surface $\tilde M_{i}$ of $M_i$ such that $\lambda^{[a,\infty)}(\tilde M_i)<\lambda(m\Sigma)$. We choose $t_0=T+t_i\alpha_i^{-2}$ and $\tilde M_{t_0}=\alpha_i^{-1} \tilde M_i+y$. We remind the readers that $M_i$ and $\tilde{M}_i$ are surfaces that we obtain after rescaling the original flow, while $\tilde M_{t_0}$ is the perturbation of $M_{t_0}$, that is an original flow time slice. We are going to prove that this is the desired perturbed surface. 

$\lambda^{[a,\infty)}(\tilde M_{i})<\lambda(m\Sigma)$ implies that 
\[\lambda^{[a\alpha_i^{-2},\infty)}(\tilde M_{t_0})<\lambda(m\Sigma),\]
hence by Theorem \ref{Thm:no certain type singularity in infinite interval}, the mean curvature flow $\tilde M_t$, starting from $\tilde M_{t_0}$, cannot generate a tangent flow $\{m\cdot \sqrt{-t}\Sigma\}_{t\in(-\infty,0)}$
in the time interval $[t_0+a\alpha_i^{-2},\infty)=[T+t_i\alpha_i^{-2}+a\alpha_i^{-2},\infty)$. 

Finally, by Lemma \ref{L:no immediate closed singularity appear}, $\tilde M_t$ can not have a blow up limit to be $\{m\cdot \sqrt{-t}\Sigma\}_{t\in(-\infty,0)}$ in the time interval $[T+t_i\alpha_i^{-2},T+(t_i+\delta)\alpha_i^{-2}]\supset[t_0,t_0+a\alpha_i^{-2}]$. Then we conclude that $\tilde M_t$ cannot have a first time blow up limit to be $\{m\cdot \sqrt{-t}\Sigma\}_{t\in(-\infty,0)}$ in the time interval $[t_0,\infty)$.
\end{proof}

\bibliography{bibfile-2}
\bibliographystyle{alpha}

\end{document}